\def\ser{\leavevmode\raise.585ex\hbox{\small er}}
\theoremstyle{nonumberplain}
\newtheorem{definition}{Definition}
\theoremstyle{plain}
\newtheorem{remark}{Remark}[section]
\theoremstyle{break}
\theoremstyle{plain} \theoremheaderfont{\normalfont\bfseries}
\newtheorem{theo}{Theorem}[section]
\theoremstyle{break}
\theoremstyle{plain} \theoremheaderfont{\normalfont\bfseries}
\theoremstyle{plain} \theoremheaderfont{\normalfont\bfseries}
\newtheorem{lemma}[theo]{Lemma}
\theoremstyle{plain} \theoremheaderfont{\normalfont\bfseries}
\newtheorem{prop}[theo]{Proposition}
\newenvironment{proof}[1][Proof]{\textbf{#1.} }{\ \rule{0.5em}{0.5em}}
\DeclareMathOperator{\Fib}{\mathsf{Fib}}
\DeclareMathOperator{\FI}{\mathsf{FI}}
\DeclareMathOperator{\FB}{\mathsf{FB}}
\DeclareMathOperator{\Mod}{Mod}
\DeclareMathOperator{\PMod}{PMod}
\DeclareMathOperator{\Diff}{Diff}
\DeclareMathOperator{\PDiff}{PDiff}
\DeclareMathOperator{\Hom}{Hom}
\DeclareMathOperator{\Ind}{Ind}
\newcommand{\BDiff}{B\Diff}
\newcommand{\BPDiff}{B\PDiff}
\newcommand{\PConf}{\text{PConf}}
\title{ Linear representation stable bounds for the integral cohomology of  pure mapping class groups } 
\author{ R. Jim\'enez Rolland \footnote{This paper has been completed with the financial support from  PAPIIT DGAPA-UNAM grant  IA104010.}} 
\date{}
\begin{document}

\maketitle
\begin{abstract} 


In this paper we study the integral cohomology  of pure mapping class groups of surfaces, and other related groups and spaces, as $\FI$-modules. We use recent results from Church, Miller, Nagpal and Reinhold to obtain explicit linear bounds for their presentation degree and  to give an inductive  description of these $\FI$-modules. Furthermore, we establish new results on  representation stability, in the sense of Church and Farb,  for the rational cohomology of pure mapping class groups of non-orientable surfaces.


\end{abstract}


\section{Introduction}

Let $	\Sigma$ be a compact connected surface with (possibly empty) boundary and let $P =  \{p_1,\ldots,p_n\}$ be a set of $n$ distinct points in the interior of $\Sigma$, we call these points {\it punctures}. Let $\Diff(\Sigma, P)$  be the group of all, orientation preserving if $\Sigma$ is orientable, homeomorphisms $h: \Sigma\rightarrow\Sigma$ such that $h(P)=P$ and $h$ restricts to the  identity on the boundary of $\Sigma$. The {\it mapping class group} $\Mod^n(\Sigma)$ is the group of isotopy classes of elements of $\Diff(\Sigma, P)$. The {\it pure mapping class group} $\PMod^n(\Sigma)$ is the subgroup of $\Mod^n(\Sigma)$ that consists of the isotopy classes of homeomorphisms fixing each puncture. If $P=\varnothing$, we write $\Mod(\Sigma)$.

In this paper we study the $\FI$-module structure of the integral cohomology of  the pure mapping class groups of orientable and non-orientable surfaces.  A compact surface of genus $g$ with $r$ boundary components  and $n$ punctures will be denoted by $\Sigma_{g,r}^n$ if it is orientable, or by $N_{g,r}^n$ if it is non-orientable.

In  \cite{CMNR} the authors study the {\it stable degree} and the {\it local degree} of an $\FI$-module and show that these invariants are easier to control in spectral sequence arguments than the generation and presentation degree. We review these notions and results in Section \ref{FIMod} below. In particular, they obtain  bounds on the stable degree and the local degree for the cohomology of configuration spaces of connected manifolds (see Lemma \ref{CONF} below).  

We recall in Section \ref{MCG} how the Birman exact sequence and certain fibrations allow us to relate the cohomology of pure mapping class groups with the cohomology of configuration spaces through a  spectral sequence.  In contrast with our previous work  in \cite{JimenezRollandMCGasFI} and  \cite{JimenezRollandRepStability}, we work over the integers and include non-orientable surfaces.  We study  here how the bounds on the  stable and local degree of  certain $\FI$-modules $V$ that we call  $\FI[\mathsf{G}]$-modules,  provide bounds for the $\FI$-module  given by the  cohomology  of a group $G$ with coefficients in $V$
(see Proposition \ref{LEMMA}). We combine this with the results from \cite{CMNR} to prove in  Theorem \ref{MAINMCG}  that  for every $k\geq0$ and any abelian group $A$, the $\FI$-module  $H^k\big( \PMod^\bullet(\Sigma);A\big)$ is finitely presented. Furthermore, in Theorem \ref{MAINMCG} we give explicit bounds for its presentation degree, stable degree and local degree, see also Theorem \ref{BDRY}.  
As a consequence from Propositions \ref{POLY} and \ref{INDUCT}, we obtain the following result.


\begin{theo}\label{MAIN1}  Let  $\Sigma$ be a surface such that:
\begin{itemize}
 \item[] $\Sigma=S^2, \Sigma_1^1$ or  $\Sigma=\Sigma_{g,r}$ with $2g+r>2$, if  the surface is orientable,  or 
 
 \item[] $\Sigma=\mathbb{R}P^2, \mathbb{K}$ or $\Sigma=N_{g,r}$  with $g\geq 3$ and $r\geq 0$ in the non-orientable case. 
 \end{itemize} 
 Let $k\geq 0$ and $\lambda=1$ if $\Sigma$ is orientable and $\lambda=0$ if $\Sigma$ is non-orientable. 
 
\begin{itemize}
\item[(a)] {\bf (Polynomial Betti numbers growth).} If $\mathbb{F}$ is a field, then there are polynomials $p^\Sigma_{k,\mathbb{F}}$ of degree at most $2k$ such that
$$\dim_{\mathbb{F}} H^k\big( \PMod^n(\Sigma);\mathbb{F}\big)= p^\Sigma_{k,\mathbb{F}}(n)$$
if  $n >\max(-1,16k-4\lambda-2)$  (and for all $n\geq 0$ if the surface $\Sigma$ has nonempty boundary).

\item[(b)] {\bf (Inductive description). }The natural map
$$\Ind^{S_n}_{S_{n-1}}H^k \big( \PMod^{n-1}(\Sigma);\mathbb{Z}\big)\rightarrow H^k \big( \PMod^{n}(\Sigma);\mathbb{Z}\big)$$
is surjective for $n >\max(0,18k-4\lambda-1)$  (for $n>2k$ if $\partial\Sigma\neq \varnothing$) and, for $n >\max(0,34k-8\lambda-2)$  (for $n>2k$ if $\partial\Sigma\neq \varnothing$), the kernel of this map is the image of the difference of the two natural maps\footnote{See Proposition \ref{INDUCT} and Remark \ref{TWOMAPS} for details.}

$$\Ind^{S_n}_{S_{n-2}}H^k \big( \PMod^{n-2}(\Sigma);\mathbb{Z}\big)\rightrightarrows \Ind^{S_n}_{S_{n-1}}H^k \big( \PMod^{n-1}(\Sigma);\mathbb{Z}\big).$$
\end{itemize}

\end{theo}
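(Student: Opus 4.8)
The plan is to deduce Theorem \ref{MAIN1} directly from Theorem \ref{MAINMCG} (and, in the bounded case, its refinement Theorem \ref{BDRY}) by feeding the linear bounds it supplies into the two general structural facts about finitely presented $\FI$-modules recorded in Propositions \ref{POLY} and \ref{INDUCT}. Recall that for each surface $\Sigma$ in the stated list and each $k\ge 0$, Theorem \ref{MAINMCG} asserts that the $\FI$-module $H^k\big(\PMod^\bullet(\Sigma);A\big)$ is finitely presented over any abelian group $A$, with stable degree at most $2k$, and with local degree and presentation degree bounded by explicit affine-linear functions of $k$ whose coefficients depend on the orientability parameter $\lambda$ and on whether $\partial\Sigma$ is empty; when $\partial\Sigma\ne\varnothing$ these bounds are sharper (Theorem \ref{BDRY}). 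So once Theorem \ref{MAINMCG} is in hand, the argument is essentially bookkeeping: no new geometric input is required.

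For part (a), take $A=\mathbb{F}$ a field. Proposition \ref{POLY} says that a finitely generated $\FI$-module $V$ over a field has the property that $n\mapsto \dim_{\mathbb{F}}V_n$ agrees with a polynomial of degree at most the stable degree $\delta(V)$, for all $n$ above a threshold controlled by $\delta(V)$ and the local degree of $V$. Inserting the bounds from Theorem \ref{MAINMCG}: the stable-degree bound $\delta\le 2k$ forces $\deg p^\Sigma_{k,\mathbb{F}}\le 2k$, and the local-degree bound turns the abstract threshold into $n>\max(-1,16k-4\lambda-2)$. When $\partial\Sigma\ne\varnothing$ the sharper estimates of Theorem \ref{BDRY} make this threshold non-positive, so the polynomial identity holds for every $n\ge 0$.

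For part (b), take $A=\mathbb{Z}$. Proposition \ref{INDUCT} is the standard presentation-in-degrees description of a finitely presented $\FI$-module $V$: the natural map $\Ind^{S_n}_{S_{n-1}}V_{n-1}\to V_n$ is surjective once $n$ exceeds the generation degree of $V$, and for $n$ above a bound determined by the generation and presentation degrees of $V$ the kernel of this map is exactly the image of the difference of the two natural maps from $\Ind^{S_n}_{S_{n-2}}V_{n-2}$. Plugging in the generation- and presentation-degree bounds of Theorem \ref{MAINMCG} produces the surjectivity range $n>\max(0,18k-4\lambda-1)$ and the exactness range $n>\max(0,34k-8\lambda-2)$; in the nonempty-boundary case Theorem \ref{BDRY} gives generation and presentation degree at most $2k$, so both ranges collapse to $n>2k$.

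The genuinely delicate points are twofold. First, one must check that the list of surfaces in the hypothesis is exactly the set to which Theorem \ref{MAINMCG} — and hence the Birman-exact-sequence and configuration-space machinery behind it (Lemma \ref{CONF}, Proposition \ref{LEMMA}) — applies; this is why small cases such as the disk, the annulus, the sphere or $\mathbb{R}P^2$ with too few punctures, and the M\"obius band must be excluded. Second, one must propagate the affine-linear bounds of Theorem \ref{MAINMCG} faithfully through the thresholds of Propositions \ref{POLY} and \ref{INDUCT} so that the constants $16k-4\lambda-2$, $18k-4\lambda-1$ and $34k-8\lambda-2$ emerge exactly as stated. I expect the main obstacle to be not conceptual but this careful tracking of constants, together with verifying that the boundary refinement in Theorem \ref{BDRY} is indeed strong enough to lower the thresholds all the way to $n>2k$ (respectively to all $n\ge 0$) — which ultimately reflects the simpler $\FI$-module structure of the cohomology of configuration spaces of surfaces with boundary.
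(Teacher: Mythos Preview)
Your proposal is correct and is exactly the paper's approach: the author states explicitly that Theorem \ref{MAIN1} is obtained ``as a consequence from Propositions \ref{POLY} and \ref{INDUCT}'' applied to the bounds of Theorem \ref{MAINMCG} (and Theorem \ref{BDRY} in the boundary case), with no further argument. Your only imprecision is in the closing remarks---the sphere and $\mathbb{R}P^2$ are \emph{not} excluded from the hypothesis (they are handled via the fibrations of Proposition \ref{FIBRATIONS}), and in Proposition \ref{POLY} the threshold depends only on $h^{\text{max}}(V)$, not on $\delta(V)$---but neither affects the deduction.
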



Theorem \ref{MAIN1}(a) is known to be true for all $n\geq 0$  for pure mapping class groups of orientable surfaces with boundary   (\cite[Theorems 1.2]{JimenezRollandMCGasFI})  and in the stable range  for orientable closed surfaces (\cite[Theorem 1.5]{JimenezRollandMCGasFI}).  Furthermore,  in \cite[Theorem 1.1]{BOTIL} B{\"o}digheimer and Tillmann gave a decomposition of the classifying space associated with the stable pure mapping class groups of orientable surfaces with boundary $\PMod^n(\Sigma_{\infty,r})$. Together with Harer's homological stability theorem  \cite{HarerStability} this implies that for any field $\mathbb{F}$, in cohomological degrees $*\leq g/2$, 
 $$H^*(\PMod ^n(\Sigma_{g,r});\mathbb{F}) \cong H^* (\Mod(\Sigma_{g,r});\mathbb{F}) \otimes\mathbb{F}[x_1,\ldots,x_n],$$
\noindent  where each $x_i$ has degree $2$. The action of the symmetric group $S_n$  on the left hand side corresponds to  the action of $S_n$ on the polynomial ring in $n$ variables by permutation of the variables $x_i$. 
Therefore, for an orientable surface $\Sigma$ with non-empty boundary,    it follows that for $*\leq g/2$ the polynomials $p^{\Sigma}_{*,\mathbb{F}}(n)$ from Theorem \ref{MAINMCG}(a) are given by 
\[
p^{\Sigma}_{*,\mathbb{F}}(n)= \dim_\mathbb{F} H^*(\PMod ^n(\Sigma_{g,r});\mathbb{F}) =   \begin{dcases}
        \dim_\mathbb{F} H^*(\Mod(\Sigma_{g,r});\mathbb{F}) & \text{ if  }*=2k+1, \\
        {{n+k-1}\choose{n-1}}+\dim_\mathbb{F} H^*(\Mod(\Sigma_{g,r});\mathbb{F})  & \text{ if }*=2k,\\
    \end{dcases}
\]
of degree $0$ and $k$, respectively.


Theorem \ref{MAIN1}(b)   gives an  inductive description of the $\FI$-module in terms of a explicit finite amount of data; it is equivalent to the statement that the   $\FI$-module $H^k(\PMod^{\bullet}(\Sigma);\mathbb{Z})$ is {\it centrally stable}  at $>\max(0,34k-8\lambda-2)$, in the sense of  Putman's  \cite{PUTCentral} original notion of {\it central stability}  (see Section \ref{GENSEC}). 

 \begin{remark} In \cite{PUTStab}, a preprint version of   \cite{PUTCentral}, Putman proved that the homology of pure mapping class groups of manifolds with boundary satisfies  central stability. His   result \cite[Theorem D]{PUTStab} includes orientable and  non-orientable manifolds, not necessarily of finite type, with coefficients in a field (of characteristic bounded below or zero characteristic) and obtains an exponential stable range. In contrast, our approach for  dimension $2$  works over $\mathbb{Z}$ and gives linear bounds for the stable range. It  would be interesting to see whether the central stability techniques used in \cite{PUTStab} together with results from \cite{CMNR} could be used to obtain  better bounds and include mapping class groups for general manifolds.
  \end{remark}

Furthermore, from \cite[Theorem 1.5, 1.13]{CEF}  (see also \cite[Proposition 3.9]{CEFPointCounting}) it follows that Theorem \ref{MAINMCG} also implies that the rational cohomology of  pure mapping class group of surfaces satisfies {\it representation stability} and have characters that are eventually polynomial.  From Proposition \ref{STABLERAN} and the upper bounds for stable degree and the local degree  obtained in Theorem \ref{MAINMCG},  an explicit  stable range is obtained.


\begin{theo}\label{REPSTABNOR}  
Let  $k\geq 0$ and consider  $\Sigma$  a surface as in Theorem \ref{MAIN1}. Let
$$N= 
    \begin{cases}
       4k& \text{if  }\partial\Sigma\neq\varnothing,\\
       \max(0,20k-4\lambda-1)& \text{otherwise, with  $\lambda=1$ if $\Sigma$ is orientable and $\lambda=0$ if not}.
   \end{cases}$$


\begin{itemize}
\item[{\bf i)}] {\bf (Representation stability over $\mathbb{Q}$).} The decomposition of $H^k\big( \PMod^n(\Sigma);\mathbb{Q}\big)$ into irreducible $S_n$-representations stabilizes in the sense of uniform representation stability (as defined in \cite{CF})  for $n\geq N$. 

\item[{\bf ii)}]{\bf (Polynomial characters).} For $n\geq N$, the character $\chi_n$  of $H^k\big( \PMod^n(\Sigma);\mathbb{Q}\big)$ is of the form:
$\chi_{n}=Q_k(Z_1,Z_2,\ldots,Z_r),$
where deg$(\chi_n)\leq 2k$  and $Q_k\in\mathbb{Q}[Z_1,Z_2,\ldots]$  is a unique polynomial  in the class functions $$Z_{\ell}(\sigma):=\# \text{ cycles of length  $\ell$ in }\sigma, \text{\hspace{5mm} for  any }\sigma\in S_n$$  that is independent of $n$. 

 \item[{\bf iii)}] {\bf (Stable inner products).} If $Q\in\mathbb{Q}[Z_1,Z_2,\ldots]$  is any character polynomial, then the inner product  $\langle \chi_{n},Q\rangle_{S_n}$ is  independent of $n$ once $n\geq\max(N,2k+\deg(Q))$.
 
\item[{\bf iv)}] {\bf (``Twisted'' homological stability over $\mathbb{Q}$).}  Let $\{W_n\}$ be a sequence of finite dimensional $S_n$-representations, then the dimension $\dim_{\mathbb{Q}}\big(H^i(\Mod^n(\Sigma);W_n)\big)$ is constant for $n\geq N$. In particular, 
the sequence $\{H^k \big( \Mod^n(\Sigma);\mathbb{Q}\big)\}$ satisfies rational homological stability for $n\geq N$.
\end{itemize}
\end{theo}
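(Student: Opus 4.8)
The plan is to deduce Theorem \ref{REPSTABNOR} from Theorem \ref{MAINMCG} (and the surrounding material in Section \ref{FIMod}) in the same spirit as the classical consequences of finite generation/presentation of $\FI$-modules over $\mathbb{Q}$ recorded in \cite{CEF} and \cite{CEFPointCounting}. First I would invoke Theorem \ref{MAINMCG} to get that $V^k := H^k(\PMod^\bullet(\Sigma);\mathbb{Q})$ is a finitely presented $\FI$-module over $\mathbb{Q}$, with the explicit bounds on its stable degree $\delta(V^k)$ and local degree $h(V^k)$ stated there; in characteristic zero these bounds translate into a bound on the weight (at most $2k$, giving the $\deg(\chi_n)\le 2k$ and the polynomial degree $\le 2k$ in (ii)) and, via Proposition \ref{STABLERAN}, into the explicit stabilization threshold $N$. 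The number $N$ in the statement is precisely the value produced by feeding the stable and local degree bounds from Theorem \ref{MAINMCG} into the stable-range criterion of Proposition \ref{STABLERAN}; so the first real step is to carry out that arithmetic and record it — the $4k$ in the boundary case and the $\max(0,20k-4\lambda-1)$ in the closed case.

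For part (i), once $V^k$ is finitely generated in degree $\le N$ and presented with relations in degree $\le N$, the structure theory of $\FI$-modules over $\mathbb{Q}$ (\cite[Theorem 1.13]{CEF}, quantified by Proposition \ref{STABLERAN}) says exactly that the sequence of $S_n$-representations $\{(V^k)_n\}$ is uniformly representation stable for $n \ge N$ in the sense of \cite{CF}: the multiplicities of the irreducibles $V(\lambda)_n$ stabilize and the maps $(V^k)_n \to (V^k)_{n+1}$ are injective and eventually surjective onto the $S_{n+1}$-span. Part (ii) then follows because a finitely generated $\FI$-module over $\mathbb{Q}$ has character eventually equal to a character polynomial $Q_k \in \mathbb{Q}[Z_1, Z_2, \ldots]$ of degree at most the weight of the module (\cite[Theorem 1.5]{CEF}, or \cite[Proposition 3.9]{CEFPointCounting}), and the weight is bounded by $2k$ by the stable degree bound; the range $n \ge N$ for the equality $\chi_n = Q_k(Z_1, \ldots, Z_r)$ comes again from Proposition \ref{STABLERAN}. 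Part (iii) is the standard statement that inner products $\langle \chi_n, Q\rangle_{S_n}$ with a fixed character polynomial stabilize, with the extra $\deg(Q)$ appearing in the threshold $\max(N, 2k+\deg(Q))$ because evaluating the inner product against a degree-$d$ polynomial only "sees" partitions with at most $2k + d$ boxes below the first row — this is a direct citation of \cite[Theorem 1.13]{CEF}, again quantified by Proposition \ref{STABLERAN}.

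For part (iv) I would use the universal-coefficients/dual pairing between $H^i(\Mod^n(\Sigma); W_n)$ and $H^i(\PMod^n(\Sigma);\mathbb{Q}) \otimes W_n$ taken over $S_n$: since $\Mod^n(\Sigma)/\PMod^n(\Sigma) \cong S_n$, the Shapiro/transfer argument gives $H^i(\Mod^n(\Sigma); W_n) \cong \big(H^i(\PMod^n(\Sigma);\mathbb{Q}) \otimes W_n\big)^{S_n}$, whose dimension is $\langle \chi_n^{(i)}, \chi_{W_n}\rangle_{S_n}$. Representation stability of $\{H^i(\PMod^n(\Sigma);\mathbb{Q})\}$ then forces this inner product to be eventually constant for any consistent sequence $\{W_n\}$ — and in particular, taking $W_n$ trivial recovers rational homological stability for $\Mod^n(\Sigma)$ — with the range $n \ge N$ inherited from parts (i)–(iii). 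The only genuinely nontrivial input is Theorem \ref{MAINMCG} itself; everything here is bookkeeping translating its stable-degree and local-degree bounds through Proposition \ref{STABLERAN} and the $\FI$-module dictionary over $\mathbb{Q}$. The main obstacle, such as it is, is making the constant $N$ come out exactly as claimed: one has to be careful that the local degree bound, not just the generation/presentation degree, is what controls the stable range in Proposition \ref{STABLERAN}, and to handle the $\lambda$-dependence (orientable vs.\ non-orientable) and the boundary/no-boundary dichotomy uniformly, matching the case split already present in Theorem \ref{MAINMCG}.
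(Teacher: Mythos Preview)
Your proposal is correct and follows essentially the same approach as the paper: the paper does not give a separate proof of Theorem \ref{REPSTABNOR} but simply says it follows from Theorem \ref{MAINMCG} via Proposition \ref{STABLERAN} together with \cite[Theorems 1.5, 1.13]{CEF} and \cite[Proposition 3.9]{CEFPointCounting}. Your arithmetic for $N$ (feeding $\delta\leq 2k$ and $h^{\text{max}}\leq\max(-1,16k-4\lambda-2)$ into $2\delta+h^{\text{max}}+1$, and using the $\FI\#$ part of Proposition \ref{STABLERAN} with Theorem \ref{BDRY} for the boundary case) is exactly what is intended, and your treatment of part (iv) via the identification $H^i(\Mod^n(\Sigma);W_n)\cong\big(H^i(\PMod^n(\Sigma);\mathbb{Q})\otimes W_n\big)^{S_n}$ is the standard one.
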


Theorem \ref{REPSTABNOR}  above recovers \cite[Theorem 1.1]{JimenezRollandMCGasFI} for the pure mapping class group of  orientable surfaces and  gives new representation stability results for the rational cohomology of pure mapping class groups of  non-orientable surfaces and low genus cases that were not worked out explicitly in  \cite{JimenezRollandMCGasFI} and \cite{JimenezRollandRepStability}. 
 Using the theory of weight and stability degree from \cite{CEF} better bounds can be computed for the stable range. However, if the surface $\Sigma$ has nonempty boundary, from Proposition \ref{STABLERAN} the stable range can be improved to  $N=4k$, which recovers the stable range obtained in  \cite[Theorem 1.1]{JimenezRollandMCGasFI} for orientable surfaces with boundary.   

Theorem  \ref{REPSTABNOR}(iv) recovers rational homological stability ``by punctures'' for mapping class groups of surfaces, this was previously obtained in \cite[Propositon 1.5] {HatcherWahl}, with a better linear stable range, for mapping class groups of connected manifolds  of dimension $\geq 2$ (see also \cite[Theorems 1.3 \& 1.4]{HANDBURY}).\bigskip




\noindent{\large \bf Known results for pure mapping class groups of non-orientable surfaces.}
Wahl proved in  \cite[Theorem A]{WAHLNONORIENT}  that for $g\geq 4k+3$, the homology  groups $H_k(\Mod(N_{g,r});\mathbb{Z})$ are independent of $g$ and $r$. This is  the non-orientable  version of Harer's stability theorem for mapping class group of orientable surfaces \cite{HarerStability}.  Homological stability for mapping class groups of non-orientable surfaces with marked points was obtained in  \cite[Theorem 1.1]{HANDBURY}: 
\begin{center}
{\it for fixed $n\geq 1$ the homology  groups $H_k(\Mod^n(N_{g,r});\mathbb{Z})$ and $H_k(\PMod^n(N_{g,r});\mathbb{Z})$

 are independent of $g$ and $r$,  when $g\geq 4k+3$.}
\end{center}

 Wahl and Handbury also study the stable  pure mapping class groups of  non orientable surfaces with boundary $\PMod^n(N_{\infty,r})$. 
In particular, \cite[Theorem 1.2]{HANDBURY} shows that there exists a homology isomorphism $B\PMod^n(N_{\infty,r})\rightarrow B\PMod(N_{\infty,r})\times B(O(2))^n.$ This isomorphism could be use to explicitly compute, for $*\geq 4k+3$ the polynomials $p^{\Sigma}_{*,\mathbb{Z}_2}(n)$ from Theorem \ref{MAINMCG}(a) for a non-orientable surface $\Sigma$ with non-empty boundary.
On the other hand, Korkmaz \cite{KOR}  and Stukow \cite{STUKOW} computed the first homology group for the mapping class group of a non-orientable surfaces. In particular, they proved that $$H_1(\PMod^n(N_g);\mathbb{Z})\cong (\mathbb{Z}_2)^{n+1}\text{ if }g\geq 7,$$ see for instance \cite[Theorem 5.11]{KOR}. This computation shows that that homological stability fails integrally for the pure mapping class group with respect to the number of marked points,  and suggest a representation stability phenomenon.

For low genus, descriptions of the mod $2$ cohomology ring have been obtained for  the mapping class group $\Mod^n(\mathbb{K})$ of the Klein  bottle $\mathbb{K}$ with marked points (\cite{KLEIN}), and for the mapping class group  $\Mod^n(\mathbb{R}P^2)$  of the projective plane $\mathbb{R}P^2$ with marked points (\cite{PROJECTIVE}). Similar techniques should provide computations for the the mod $2$ cohomology ring for $\PMod^n(\mathbb{K})$ and $\PMod^n(\mathbb{R}P^2)$. \bigskip

\noindent{\large \bf Further results.}
With the same techniques, we obtain bounds for the  presentation degree of the $\FI$-module  given by the integral cohomology of the {\it hyperelliptic mapping class group with marked points} $\Delta_g^n$  and representations stability for its rational cohomology groups in  Theorem \ref{HYPERMCG}. For more details, see Section \ref{HYPERSEC}.\\

Finally we obtain Theorem \ref{DIFFEO}, an integral version of \cite[Theorem 6.6]{JimenezRollandMCGasFI} for the cohomology of classifying spaces $B\PDiff^n(M)$ of pure diffeomorphisms groups of a manifold $M$ of dimension $\geq 3$. Theorem \ref{DIFFEO} also includes the case of $M$ being non-orientable.   As a consequence we have the following result:

\begin{theo}\label{DIFFEOCOR}  Let $k\geq 0$ and  $M$ be a connected real manifold (orientable or non-orientable) of dimension $d\geq 3$. Let $\lambda=1$ if $M$ is orientable and $\lambda=0$ if $M$ is non-orientable.
\begin{itemize}
\item[(a)]  Let $\mathbb{F}$ be a field. If $H^k\big( \BPDiff^n(M);\mathbb{F}\big)$ has  finite dimension for every $n\geq 1$, then there are polynomials $p^M_{k,\mathbb{F}}$ of degree at most $k$ such that
$$\dim_{\mathbb{F}} H^k\big( \BPDiff^n(M);\mathbb{F}\big)= p^M_{k,\mathbb{F}}(n)\text{ \hspace{0.5cm} if }n >\max(-1,8k-2\lambda-2).$$
\item[(b)] The natural map
$$\Ind^{S_n}_{S_{n-1}}H^k \big( \BPDiff^{n-1}(M);\mathbb{Z}\big)\longrightarrow H^k \big(\BPDiff^n(M);\mathbb{Z}\big)$$
is surjective for $n >\max(0,9k-2\lambda-1)$ and for $n >\max(0,17k-4\lambda-2)$ the kernel of this map is the image of the difference of the two natural maps
$$\Ind^{S_n}_{S_{n-2}}H^k \big( \BPDiff^{n-2}(M);\mathbb{Z}\big)\rightrightarrows \Ind^{S_n}_{S_{n-1}}H^k \big(\BPDiff^{n-1}(M);\mathbb{Z}\big).$$
\end{itemize}
\end{theo}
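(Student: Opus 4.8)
The plan is to derive Theorem~\ref{DIFFEOCOR} as a formal consequence of the (already announced) Theorem~\ref{DIFFEO}, exactly in parallel with how Theorem~\ref{MAIN1} is deduced from Theorem~\ref{MAINMCG} via Propositions~\ref{POLY} and \ref{INDUCT}. First I would recall that Theorem~\ref{DIFFEO} provides, for each $k\ge 0$ and each connected manifold $M$ of dimension $d\ge 3$, explicit linear upper bounds for the \emph{stable degree} $\delta(H^k(\BPDiff^\bullet(M);\mathbb{Z}))$ and the \emph{local degree} $h^0(H^k(\BPDiff^\bullet(M);\mathbb{Z}))$ of the $\FI$-module $H^k(\BPDiff^\bullet(M);\mathbb{Z})$; from the discussion in Section~\ref{FIMod} these two invariants also control the presentation degree. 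Writing $\lambda=1$ if $M$ is orientable and $\lambda=0$ otherwise, the configuration-space input (Lemma~\ref{CONF}) applied to $M^\circ$ — the complement of a point, which for $d\ge 3$ is again connected and has the same homotopy-theoretic behaviour needed — yields stable degree at most $2k$ and the factor $\lambda$ appears because orientability of $M$ improves the vanishing range of $H^*(\C_\bullet(M))$ by one cohomological degree (this is the $-\lambda$ shift propagated through the Leray--Serre spectral sequence for $\C_n(M)\to \BPDiff^n(M)\to \BPDiff(M)$, cf. Proposition~\ref{LEMMA}).

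The heart of the argument is then purely $\FI$-module-theoretic. For part~(a): assuming $\dim_\mathbb{F} H^k(\BPDiff^n(M);\mathbb{F})<\infty$ for all $n$, the $\FI_\mathbb{F}$-module $H^k(\BPDiff^\bullet(M);\mathbb{F})$ is finitely generated (indeed finitely presented) with generation degree and relation degree bounded by the explicit linear functions coming from Theorem~\ref{DIFFEO}; I would invoke Proposition~\ref{POLY}, which says that a finitely generated $\FI_\mathbb{F}$-module $V$ has $\dim_\mathbb{F} V_n$ equal to a polynomial of degree $\le \delta(V)$ for $n$ beyond a threshold governed by $\delta(V)$ and $h^0(V)$. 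Plugging the bounds $\delta\le 2k$ and $h^0\le$ (the relevant linear expression) in, the degree bound becomes $\le 2k$; but the extra halving in the exponent ($k$ rather than $2k$) must come from the fact that $H^*(\C_n(M))$ is concentrated in \emph{even} degrees up to the stable range — more precisely, odd cohomology of configuration spaces of a $d\ge 3$ manifold contributes classes of higher "weight", so the stable degree of $H^k$ is actually $\le k$, not $2k$. I would state this sharpened configuration-space bound explicitly (it is the $d\ge 3$ analogue of the surface computation, where the jump in the classes $x_i$ has degree $2$) and feed it into Proposition~\ref{POLY} to get degree $\le k$ and the range $n>\max(-1,8k-2\lambda-2)$. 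For part~(b): this is the statement that $H^k(\BPDiff^\bullet(M);\mathbb{Z})$ is \emph{centrally stable} in the range indicated, which by Proposition~\ref{INDUCT} (and Remark~\ref{TWOMAPS} for the identification of the two maps $\Ind^{S_n}_{S_{n-2}}\rightrightarrows\Ind^{S_n}_{S_{n-1}}$) is equivalent to generation degree $\le$ (linear bound) giving surjectivity for $n>\max(0,9k-2\lambda-1)$ and presentation degree $\le$ (linear bound) giving the kernel description for $n>\max(0,17k-4\lambda-2)$. The precise constants $9k$ and $17k$ are read off by substituting the Theorem~\ref{DIFFEO} bounds into the general inequalities "gen deg $\le$ something, pres deg $\le$ something" from Section~\ref{GENSEC}.

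The main obstacle I anticipate is bookkeeping the exact linear constants so that they match what Theorem~\ref{DIFFEO} delivers — in particular tracking the $\lambda$-dependence and making sure the spectral-sequence argument of Proposition~\ref{LEMMA} (cohomology of $\BPDiff(M)$ with coefficients in the $\FI[\mathsf{G}]$-module $H^*(\C_\bullet(M))$) does not inflate the stable/local degrees beyond the claimed linear functions; a single off-by-one in the vanishing line of $H^*(\C_n(M))$ propagates into all four thresholds. A secondary point that needs care is the hypothesis in part~(a): unlike the surface case, where finite-dimensionality of the cohomology of mapping class groups is automatic, for $\BPDiff(M)$ one genuinely must assume $H^k(\BPDiff^n(M);\mathbb{F})$ is finite-dimensional (so that finite generation of the $\FI$-module is not vacuous), and I would make sure Proposition~\ref{POLY} is applied only under that hypothesis. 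Modulo these constant-chasing issues, the proof is a direct citation of Theorem~\ref{DIFFEO} together with Propositions~\ref{POLY} and \ref{INDUCT}, mirroring the deduction of Theorem~\ref{MAIN1} from Theorem~\ref{MAINMCG}.
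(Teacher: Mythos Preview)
Your overall plan is correct and matches the paper exactly: Theorem~\ref{DIFFEOCOR} is a direct corollary of Theorem~\ref{DIFFEO} via Propositions~\ref{POLY} and~\ref{INDUCT}, with no further work needed. The paper's entire proof is the sentence ``From Theorem~\ref{DIFFEO} and Propositions~\ref{POLY} and~\ref{INDUCT} we obtain Theorem~\ref{DIFFEOCOR}.''

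However, your middle paragraph contains an unnecessary and incorrect detour. You write that the configuration-space input yields stable degree at most $2k$ and then speculate that ``the extra halving in the exponent ($k$ rather than $2k$) must come from the fact that $H^*(\C_n(M))$ is concentrated in even degrees up to the stable range.'' This is wrong on both counts. Theorem~\ref{DIFFEO} already states $\delta\big(H^k(\BPDiff^\bullet M;A)\big)\leq k$, not $2k$; there is no halving to explain. The reason the bound is $k$ rather than $2k$ is already visible in Lemma~\ref{CONF}: for manifolds of dimension $d\geq 3$ one has $\mu=1$, so $\delta\big(H^q(\PConf(M);A)\big)\leq q$, whereas for surfaces $\mu=2$ gives $\delta\leq 2q$. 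This has nothing to do with even-degree concentration. Likewise, your remark about ``$M^\circ$ --- the complement of a point'' is off: the relevant fibration is $\PConf_n(\mathring M)\to \BPDiff^n(M)\to \BDiff(M)$ with $\mathring M$ the interior of $M$, not $M$ minus a point. Once you simply cite Theorem~\ref{DIFFEO} as a black box (as you correctly do in your concluding sentence), none of this arises.

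One small further point: you write that finite-dimensionality of $H^k(\BPDiff^n(M);\mathbb{F})$ for all $n$ implies the $\FI$-module is finitely generated. That inference is not valid in general, and it is not what is used. Proposition~\ref{POLY} only requires that $V$ be \emph{presented in finite degrees} (which Theorem~\ref{DIFFEO} supplies unconditionally) together with each $V_n$ finite-dimensional (which is the hypothesis in part~(a)); finite generation is not needed.
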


\begin{remark}This paper started from  Miller and Wilson's Remark A.14 in \cite{MIWILSON}. They suggest there  that  bounds for generation and relation degree of the $\FI$-module $H^k\big(B\PDiff^{\bullet}(M);\mathbb{Z}\big)$ could  be obtained using \cite[Cor. A.4]{MIWILSON}. The advantage of using the techniques from \cite{CMNR} is that linear bounds can be obtained.
\end{remark}


\noindent{\bf Acknowledgements.} I am grateful to  Marco Boggi, Cristhian Hidber,  Jeremy Miller, Jenny Wilson and Miguel Xicot\'encatl   for useful discussions and generous explanations. I also want to thank 
Jes\'us Hern\'andez Hern\'andez and  Blazej Szepietowski for pointing out key references on mapping class groups of non-orientable surfaces.

\section{ Preliminaries on $\FI$-modules }\label{FIMod}

We use the framework of $\FI$-modules to study stability properties of sequences of symmetric group representations. \\

\noindent {\bf Notation:} An {\it  $\FI$-module}  (resp. {\it  $\FB$-module}) is a functor $V$ from the category of finite sets and injections (resp. and bijections) to the category of $\mathbb{Z}-$modules $\Mod_{\mathbb{Z}}$. Given a finite set $T$  we write $V_T$ for $V(T)$ and for every $n\in \mathbb{N}$ we write $V_n$ for $V_{[n]}=V([n])$ where $[n]:=\{1,2,\ldots n\}$. The category of $\FI$-modules $\Mod_{\FI}$ is abelian.

\begin{definition} Let $\mathbb{F}$ be any ring. An $\FI$-module $V$ is an {\it $\FI$-module over $\mathbb{F}$}  if the functor $V : \Mod_{\FI}\rightarrow \Mod_{\mathbb{Z}}$ factors through $\Mod_{\mathbb{F}}\rightarrow \Mod_{\mathbb{Z}}$.

The forgetful functor $\mathcal{F}:\Mod_{\FI} \rightarrow\Mod_{\FB}$ has  left adjoint that we denote  by $\mathcal{I} : \Mod_{\FB}\rightarrow\Mod_{\FI}$. The $\FI$-modules of the form $\mathcal{I}(V)$ are called {\it induced}.  The $\FI$-modules that  admit a finite length filtration where  the quotients are induced modules are named {\it semi-induced}.
\end{definition}



\begin{definition}An $\FI$-module $V$ is generated by a  set $S\subseteq \sqcup_{n\geq 0} V_n$ if $V$ is the smallest FI-submodule containing S.  If $V$ is generated by some finite set, we say that $V$ is {\it finitely generated}.
\end{definition}

Finite generation of $\FI$-modules is preserved when taking subquotients, extensions and tensor products.
 Furthermore  $\FI$-modules over Noetherian rings are locally Noetherian  (\cite[Theorem A]{CEFN}). 
 Therefore  an $\FI$-module is finitely generated if and only if it is {\it finitely presented}.

\subsection{Generation and presentation degree}\label{GENSEC}

Consider the  functor $\mathcal{U}:\Mod_{\FB}\rightarrow\Mod_{\FI}$  that upgrades each $\FB$-module to an $\FI$-module by declaring that all injections which are not bijections act as the zero map. The left adjoint is called {\it $\FI$-homology} and  it is a right-exact functor denoted by $H_0^{\FI}$. Its $i$th left-derived functor  is denoted by $H_i^{\FI}$, and will be often regarded as an $\FI$-module by post-composing  with $\mathcal{U}$.

The {\it degree} of a non-negatively graded abelian group $M$ is $\deg M =\min\{d \geq -1: M_n = 0\text{ for }n > d\}$. In particular, an $\FI$-module or $\FB$-module $V$  gives a non-negatively graded abelian group by evaluating on the sets $[n]$ and  $\deg V$ can be considered. 

\begin{definition}  For an $\FI$-module or
$\FB$-module $V$ let  $t_i(V) := \deg H^{\FI}_i(V)$.  The {\it generation degree}  of $V$ is  $t_0 (V)$ and $prd(V):=\max(t_0(V),t_1(V))$  is called the {\it presentation degree} of $V$. An $\FI$-module $V$ is {\it presented in finite degrees} if $t_0(V) <\infty$ and $t_1(V)<\infty$.
\end{definition}

  \begin{remark} A finitely generated $\FI$-module is presented in finite degrees. However,  being `finitely generated'  is a stronger condition than being presented in finite degree.   \end{remark}

The category of $\FI$-modules presented in finite degrees is abelian. Furthermore $\FI$-modules admit the following uniform description:
  
  \begin{theo}[Theorem C \cite{CE}]\label{UNIFORM} Let $V$ an $\FI$-module with presentation degree $prd(V)=N$. Then for any finite set $T$, 
\begin{equation}\label{CENTRAL} V_T=\text{colim}_{\substack{S\subset T\\ |S|\leq N}} V_S,\end{equation}
and $N$ is the smallest integer such that (\ref{CENTRAL}) holds for all finite sets.
\end{theo}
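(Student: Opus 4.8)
The plan is to compare $V$ with a presentation by induced $\FI$-modules generated in degrees $\le N$, and to exploit that the colimit in~(\ref{CENTRAL}) is right-exact. Fix a finite set $T$ and let $\mathcal{P}_N(T)$ be the poset of subsets $S\subseteq T$ with $|S|\le N$; it is nonempty and closed under intersections (so any two members have a common lower bound), but it is \emph{not} filtered when $|T|>N$, so colimits over $\mathcal{P}_N(T)$ are only right-exact. Let $C_N\colon\Mod_{\FI}\to\Mod_{\mathbb{Z}}$ be $C_N(W)=\operatorname{colim}_{S\in\mathcal{P}_N(T)}W_S$; this is the composite of restriction along $\mathcal{P}_N(T)\hookrightarrow\FI$ (exact) with a colimit (a left adjoint, hence right-exact and preserving direct sums), and it carries a natural comparison map $c_W\colon C_N(W)\to W_T$. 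The assertion~(\ref{CENTRAL}) for $T$ is exactly that $c_V$ is an isomorphism.

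The technical heart is the claim that $c_W$ is an isomorphism whenever $W=\mathcal{I}(U)$ with $\deg U\le N$ — in particular for the representable $\FI$-modules $M(m)=\mathbb{Z}[\Hom_{\FI}([m],-)]$ with $m\le N$, and hence for finite direct sums of such. This holds because every element of $W_T$ is supported on a subset of $T$ whose cardinality equals its degree, which is $\le N$: corestricting each defining morphism $[m]\hookrightarrow T$ (respectively each summand of $(\mathcal{I}U)_T$) to its image produces a well-defined map $W_T\to C_N(W)$ inverting $c_W$, where closure of $\mathcal{P}_N(T)$ under intersections is used to check the relations in the colimit. Equivalently, $C_N$ is the value at $T$ of the left Kan extension along the inclusion of the full subcategory of $\FI$ on sets of size $\le N$, which fixes representables in degrees $\le N$.

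Now assume $prd(V)=N<\infty$. Since $t_0(V)\le N$, one builds a surjection $P_0:=\mathcal{I}\!\big(\bigoplus_{n\le N}V_n\big)\twoheadrightarrow V$ (from the generators of $V$ in degrees $\le N$); here $P_0$ is induced, generated in degrees $\le N$, and $H_0^{\FI}(P_0)\to H_0^{\FI}(V)$ is surjective with kernel concentrated in degrees $\le N$. Let $K=\ker(P_0\to V)$. As $P_0$ is induced, $H_i^{\FI}(P_0)=0$ for $i\ge 1$, so the long exact sequence in $\FI$-homology presents $H_0^{\FI}(K)$ as an extension of a submodule of $H_0^{\FI}(P_0)$ (in degrees $\le N$) by a quotient of $H_1^{\FI}(V)$ (in degrees $\le t_1(V)\le N$); hence $t_0(K)\le N$. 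Choosing an induced cover $P_1\twoheadrightarrow K$ in degrees $\le N$ yields an exact sequence $P_1\to P_0\to V\to0$ with $P_0,P_1$ induced in degrees $\le N$. Applying the right-exact functor $C_N$ and the exact functor $W\mapsto W_T$ to $V=\operatorname{coker}(P_1\to P_0)$, and using that $c_{P_0}$ and $c_{P_1}$ are isomorphisms together with naturality of $c$, we conclude that $c_V$ is an isomorphism; since $T$ was arbitrary, (\ref{CENTRAL}) holds for this $N$.

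Finally, minimality: suppose (\ref{CENTRAL}) holds for every finite set with some $N'$ in place of $N$. Then $V_T$ is spanned by images of the $\FI$-maps $V_S\to V_T$ with $|S|\le N'$, so $t_0(V)\le N'$; form $P_0\twoheadrightarrow V$ induced in degrees $\le N'$ and $K=\ker(P_0\to V)$ as above. Both $V$ and $P_0$ satisfy (\ref{CENTRAL}) with bound $N'$, so applying $C_{N'}$ to $0\to K\to P_0\to V\to0$ and invoking right-exactness shows that $c_K\colon C_{N'}(K)\to K_T$ is surjective, i.e.\ $K$ is generated in degrees $\le N'$; the $\FI$-homology long exact sequence then gives $t_1(V)\le t_0(K)\le N'$, so $prd(V)\le N'$. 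Hence the $N=prd(V)$ of the statement is the least integer for which (\ref{CENTRAL}) holds. I expect the main obstacle to be the isomorphism $C_N(\mathcal{I}U)\cong(\mathcal{I}U)_T$ — i.e.\ controlling the non-filtered colimit over $\mathcal{P}_N(T)$ on induced modules — together with the conceptual point that right-exactness of $C_N$ is precisely the amount of exactness a finite presentation provides; the integral setting adds the wrinkle that "minimal free covers" need not exist, which is why one works with $\mathcal{I}\!\big(\bigoplus_{n\le N}V_n\big)$ rather than $\mathcal{I}\big(H_0^{\FI}(V)\big)$.
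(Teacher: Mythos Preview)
The paper does not supply its own proof of this statement; it is quoted as Theorem~C of Church--Ellenberg~\cite{CE}. Your argument is correct and is essentially the one given there: identify the colimit functor $C_N$ with (evaluation at $T$ of) the left Kan extension along $\FI_{\le N}\hookrightarrow\FI$, verify it is the identity on induced modules generated in degrees $\le N$, build a two-term resolution $P_1\to P_0\to V\to 0$ by such modules from the bounds $t_0(V),t_1(V)\le N$, and conclude by right-exactness of $C_N$; the minimality direction via the diagram chase on $0\to K\to P_0\to V\to 0$ is likewise the standard one.

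One phrasing to tighten: in the minimality step, ``invoking right-exactness'' of $C_{N'}$ alone does not yield surjectivity of $c_K$; you are also using exactness of evaluation at $T$ and that $c_{P_0},c_V$ are isomorphisms, and then a four-lemma-type chase. Make that explicit. Otherwise the argument is complete.
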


If $V$ is a finitely generated $\FI$-module, then  Theorem \ref{UNIFORM} above gives a uniform description of an $\FI$-module in terms of a explicit finite amount of data.  
The condition  (\ref{CENTRAL}) in Theorem \ref{UNIFORM} can be viewed as a reformulation of Putman's {\it central
stability} condition \cite[Section 1]{PUTCentral}. In particular, if an $\FI$-module $V$ is presented in finite degrees with  $prd(V)\leq N$ then the sequence $\{V_n\}_n$ is {\it centrally stable at  $n> N$ }(see \cite{CE}  and also \cite[Theorem 3.2 ]{GL}). Putman's {\it central stability} condition \cite[Section 1]{PUTCentral}  is equivalent to the following inductive description (see for example \cite[Prop 6.2]{PATZT}):

\begin{prop}[Prop 2.4 \cite{CMNR}]\label{INDUCT}  Let $V$ be an $\FI$-module.
\begin{itemize}
\item[(1)] Then $t_0(V) \leq d$ if and only if $\Ind^{S_n}_{S_{n-1}} V_{n-1}\rightarrow V_n$ is surjective for $n > d$.
\item[(2)] Then $t_1(V) \leq r$ if and only if the kernel of $\Ind^{S_n}_{S_{n-1}} V_{n-1}\rightarrow V_n$ is the image of the difference of the two natural maps $\Ind^{S_n}_{S_{n-2}} V_{n-2}\rightrightarrows \Ind^{S_n}_{S_{n-1}} V_{n-1}$  for $n > r$. 
\end{itemize}
\end{prop}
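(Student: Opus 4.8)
The plan is to realize $t_0(V)$ and $t_1(V)$ as the vanishing ranges of $\tilde H_{-1}$ and $\tilde H_0$ of an explicit two-step augmented complex built from the induction functors, and then read off both statements. For fixed $n$, let $\tilde C_\bullet(V)_n$ be the complex of abelian groups with $p$-th term $\tilde C_p(V)_n=\bigoplus_{S\subseteq[n],\,|S|=n-p-1}V(S)$ in homological degrees $p\geq -1$ (so $\tilde C_{-1}(V)_n=V_n$), and with differential equal, on the summand indexed by $S$, to the alternating sum of the functorial maps $V(S)\to V(S\cup\{x\})$ over $x\in[n]\setminus S$, with the usual simplicial signs. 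Reindexing each $S$ by its complement $D=[n]\setminus S$ identifies $\tilde C_\bullet(V)_n$ with the reduced simplicial chain complex of the full simplex on the vertex set $[n]$, the $D$-simplex carrying the coefficient group $V([n]\setminus D)$; in particular the differential squares to zero, and $V\mapsto\tilde C_\bullet(V)_n$ is an exact functor from $\Mod_{\FI}$ to chain complexes of abelian groups, since each $\tilde C_p(V)_n$ is a finite direct sum of evaluations of $V$. Two observations link this complex to the statement: the differential $\tilde C_0(V)_n\to\tilde C_{-1}(V)_n=V_n$ is exactly the natural map $\Ind^{S_n}_{S_{n-1}}V_{n-1}\to V_n$, and the differential $\tilde C_1(V)_n\to\tilde C_0(V)_n$ has the same image as the difference of the two natural maps $\Ind^{S_n}_{S_{n-2}}V_{n-2}\rightrightarrows\Ind^{S_n}_{S_{n-1}}V_{n-1}$ — the ordered-pair packaging of $\Ind^{S_n}_{S_{n-2}}V_{n-2}$ merely duplicates each summand of $\tilde C_1(V)_n$ up to sign, as one checks by hand (this is the content of the footnote to the statement).

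Part (1) is now immediate. Since $H_0^{\FI}$ is left adjoint to the functor $\mathcal U$ that kills every non-bijection, $H_0^{\FI}(V)_n$ is $V_n$ modulo the sum of the images of $f_\ast\colon V_m\to V_n$ over all injections $f\colon[m]\hookrightarrow[n]$ with $m<n$; every such $f$ factors through a subset of $[n]$ of size $n-1$, so that submodule is exactly the image of $\tilde C_0(V)_n\to V_n$. Thus $H_0^{\FI}(V)_n=\operatorname{coker}\bigl(\tilde C_0(V)_n\to V_n\bigr)=\tilde H_{-1}(\tilde C_\bullet(V)_n)$, naturally in $V$, and hence $t_0(V)\leq d$ iff this cokernel vanishes for all $n>d$, i.e. iff $\Ind^{S_n}_{S_{n-1}}V_{n-1}\to V_n$ is surjective for $n>d$.

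For part (2) it suffices, in view of the two observations above, to establish a natural isomorphism $H_1^{\FI}(V)_n\cong\tilde H_0(\tilde C_\bullet(V)_n)$; granting it, $t_1(V)\leq r$ iff $H_1^{\FI}(V)_n=0$ for $n>r$, which says exactly that $\ker\bigl(\Ind^{S_n}_{S_{n-1}}V_{n-1}\to V_n\bigr)$ is the image of the two natural maps for $n>r$. To get the isomorphism I would first compute $\tilde C_\bullet(\mathcal I(W))_n$ for an induced $\FI$-module $\mathcal I(W)$: it decomposes as a direct sum, over subsets $W_0\subseteq[n]$, of copies of the reduced simplicial chain complex of the simplex on $[n]\setminus W_0$ (each tensored with a fixed abelian group), so $\tilde H_p(\tilde C_\bullet(\mathcal I(W))_n)=0$ for $p\geq 0$, while $\tilde H_{-1}=H_0^{\FI}(\mathcal I(W))_n=W_n$ by part (1). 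Now pick a short exact sequence $0\to V'\to\mathcal I(W)\to V\to 0$ with $\mathcal I(W)$ induced (for instance the counit $\mathcal I(\mathcal F V)\to V$, which is surjective). Exactness of $\tilde C_\bullet(-)_n$ turns this into a short exact sequence of complexes, and the resulting long exact sequence — using $\tilde H_0(\tilde C_\bullet(\mathcal I(W))_n)=0$ and the degree-$(-1)$ identification of part (1) applied to $V'$, $\mathcal I(W)$ and $V$ — reads
$$0\to\tilde H_0(\tilde C_\bullet(V)_n)\to H_0^{\FI}(V')_n\xrightarrow{\ \alpha\ }H_0^{\FI}(\mathcal I(W))_n\to H_0^{\FI}(V)_n\to 0.$$
Comparing with the $H_\bullet^{\FI}$ long exact sequence of the same short exact sequence, and using the standard vanishing $H_{i}^{\FI}(\mathcal I(W))=0$ for $i\geq 1$ (which follows since $\mathcal I$ is exact, preserves projectives, and satisfies $H_0^{\FI}\circ\mathcal I=\mathrm{id}$), gives
$$0\to H_1^{\FI}(V)_n\to H_0^{\FI}(V')_n\xrightarrow{\ \beta\ }H_0^{\FI}(\mathcal I(W))_n\to H_0^{\FI}(V)_n\to 0.$$
Since the identification $\tilde H_{-1}(\tilde C_\bullet(-)_n)\cong H_0^{\FI}(-)_n$ is natural in its argument, $\alpha=\beta$, and therefore $\tilde H_0(\tilde C_\bullet(V)_n)=\ker\alpha=\ker\beta=H_1^{\FI}(V)_n$, as required.

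The main obstacle is the acyclicity computation for induced modules, together with the combinatorial bookkeeping that pins down the two differentials of $\tilde C_\bullet(V)_n$ as the maps named in the statement: the reindexing $S\mapsto[n]\setminus S$ is what converts the ``add an element'' differential into the standard simplicial boundary, and one must check that the simplicial signs on the two facets of a $1$-simplex yield a genuine difference, not a sum, of the two natural maps. Everything else is formal — exactness of $\tilde C_\bullet(-)_n$, the two long exact sequences, naturality of the augmentation isomorphism, and the acyclicity of induced $\FI$-modules for $\FI$-homology. I would also note that this argument is essentially a repackaging of \cite[Theorem C]{CE} (Theorem \ref{UNIFORM} above) and of \cite[Prop.~6.2]{PATZT}, from which the proposition may alternatively be quoted.
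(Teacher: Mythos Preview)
The paper does not give its own proof of this proposition: it is simply cited from \cite[Prop.~2.4]{CMNR}, with Remark~\ref{TWOMAPS} added only to make the ``two natural maps'' explicit. So there is no paper-proof to compare against in detail.

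Your argument is correct and is essentially the standard one underlying the cited references. You build the (normalized) augmented simplicial object $\tilde C_\bullet(V)_n$ whose $\tilde H_{-1}$ and $\tilde H_0$ compute $H_0^{\FI}(V)_n$ and $H_1^{\FI}(V)_n$; the identification in degree $-1$ is immediate from the description of $H_0^{\FI}$ as a cokernel, and the identification in degree $0$ follows from the acyclicity of $\tilde C_\bullet(\mathcal I W)_n$ (your simplex decomposition over subsets $U\subseteq[n]$ is exactly the right computation) together with a comparison of long exact sequences. The only place one must be slightly careful is the point you flag yourself: $\tilde C_1(V)_n=\bigoplus_{|S|=n-2}V(S)$ has half as many summands as $\Ind^{S_n}_{S_{n-2}}V_{n-2}\cong\bigoplus_{f:[2]\hookrightarrow[n]}V_{[n]\setminus\operatorname{im}f}$, and one checks (as in Remark~\ref{TWOMAPS}) that the ordered pair $(i,j)$ and its swap $(j,i)$ contribute the same element up to sign under $d_1-d_2$, so the image of the difference of the two natural maps agrees with the image of your simplicial $\partial_1$. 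Your justification that $H_i^{\FI}(\mathcal I W)=0$ for $i\geq 1$ via ``$\mathcal I$ exact, preserves projectives, $H_0^{\FI}\circ\mathcal I=\mathrm{id}$'' is also fine: applying $H_0^{\FI}$ to $\mathcal I$ of a projective resolution of $W$ returns that resolution. In short, your proof is a clean, self-contained version of what is packaged in \cite{CE}, \cite{PATZT}, and \cite{CMNR}, and the paper would be content with your closing remark that the result may simply be quoted from those sources.
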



\begin{remark}\label{TWOMAPS} Observe that $\Ind^{S_n}_{S_{n-p}} V_{n-p}\cong \bigoplus_{f:[p]\hookrightarrow [n]}V_{[n]\setminus im(f)}$ for $p\geq 1$. The two natural maps  $\Ind^{S_n}_{S_{n-2}} V_{n-2}\rightrightarrows \Ind^{S_n}_{S_{n-1}} V_{n-1}$ in Proposition \ref{INDUCT} (2) are given by 
$$ d_i: \bigoplus_{f:[2]\hookrightarrow [n]}V_{[n]\setminus im(f)}\rightarrow  \bigoplus_{\bar{f}=f|[2]\setminus\{i\}}V_{[n]\setminus im(\bar{f})}$$
defined by forgetting the element $i$ from the domain $[2]$ of the injective map $f:[2]\hookrightarrow [n]$ and the maps $V_{[n]\setminus im(f)}\rightarrow V_{[n]\setminus im(\bar{f})}$ induced by the inclusions of sets $\big([n]\setminus im(f)\big)\hookrightarrow \big([n]\setminus im(\bar{f})\big)$. 
\end{remark}


Furthermore, if $V$ is an $\FI$-module over a field of characteristic zero $\mathbb{F}$, then $V$  is finitely generated if and only if the sequence $\{ V_n\}_n$ of $S_n$-representations is {\it representation stable} in the sense of \cite{CF} and each $V_n$ is finite-dimensional (see \cite[Theorem 1.13]{CEF}), with stable range $\leq 2(prd(V)+1)+1$ (see for example \cite[Theorem E]{PUTCentral}).  The notions of {\it weight} and {\it stability degree} introduced  in \cite{CEF} can be used to obtain better explicit stable ranges for representation stability over fields of characteristic zero.


\subsection{Stable and local degree}

In  \cite{CMNR} the authors study the {\it stable degree}  and  the {\it local degree}  of an $\FI$-module and show that they are easier to control in spectral sequence arguments than the generation and presentation degree. The precise definitions of these invariants are stated in \cite[Def. 2.8 \& 2.12]{CMNR}. We recall here the key properties that will be needed to obtain our main results. Proofs of these statements can be found in \cite[Section 3]{CMNR}.

The category $\FI$ has a proper self-embedding $\sqcup^{\star}:\FI\rightarrow \FI$ given by $S\mapsto S\sqcup\{\star\}$ and a morphism $f:S\rightarrow T$ extends to $f_\star:S\sqcup\{\star\}\rightarrow T\sqcup\{\star\}$ with $f_*(\star)=\star$.
The shift functor $\mathcal{S}:\Mod_{\FI}\rightarrow \Mod_{\FI}$ is given by $V\mapsto V\circ\sqcup^{\star}$.  We denote by $\mathcal{S}^n$  the $n$-fold iterate of $\mathcal{S}$. 

The {\it local degree} $h^{\text{max}}(V)$ of an $\FI$-module $V$   quantifies how much  $V$ needs to be shifted so that $\mathcal{S}^n V$ is semi-induced:

\begin{prop}[Cor 2.13\cite{CMNR}]\label{LOCALDEG}
Let $V$be an $\FI$-module presented in finite degrees. Then $\mathcal{S}^n V$ is semi-induced if and only if $n>h^{\text{max}}(V)$.
\end{prop}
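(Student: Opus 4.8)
The plan is to derive the statement from two facts about the local cohomology functors $H^i_{\mathfrak{m}}$ that underlie the local degree (\cite[Def.~2.12]{CMNR}), for which recall that $h^i(V)=\deg H^i_{\mathfrak{m}}(V)$, that $h^{\max}(V)=\max_i h^i(V)$, and that for an $\FI$-module presented in finite degrees only finitely many of the $H^i_{\mathfrak{m}}(V)$ are non-zero, so that this maximum is attained. The first fact is the homological characterization of semi-induced modules: an $\FI$-module $W$ presented in finite degrees is semi-induced if and only if $H^i_{\mathfrak{m}}(W)=0$ for every $i\geq 0$ (equivalently, $H^{\FI}_i(W)=0$ for all $i\geq 1$). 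One direction is elementary, since each representable $\FI$-module $M(a)=\mathcal{I}\big(\mathbb{Z}[\mathrm{Inj}([a],-)]\big)$ is projective in $\Mod_{\FI}$ and hence acyclic for $H^i_{\mathfrak{m}}$ with $i\geq 1$, so induced modules have vanishing higher local cohomology and the long exact sequence propagates this along any semi-induced filtration; the converse is the substantive input, which I would cite from \cite{CMNR} (it goes back to Nagpal).

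The second fact is that the shift functor commutes with local cohomology: $H^i_{\mathfrak{m}}(\mathcal{S}W)\cong \mathcal{S}\,H^i_{\mathfrak{m}}(W)$, naturally in $W$. On objects this is immediate from $H^0_{\mathfrak{m}}(W)=\ker\big(W\to \operatorname{colim}_n \mathcal{S}^n W\big)$ together with the exactness of $\mathcal{S}$ and the fact that $\mathcal{S}$ commutes with the colimit $\operatorname{colim}_n\mathcal{S}^n(-)$. For the derived functors, the point is that $\mathcal{S}=(-)\circ\sqcup^{\star}$ admits a left adjoint (the left Kan extension along $\sqcup^{\star}\colon\FI\to\FI$) and therefore preserves injective objects; being moreover exact, it carries an injective resolution $W\to I^{\bullet}$ to an injective resolution $\mathcal{S}W\to\mathcal{S}I^{\bullet}$, whence $H^i_{\mathfrak{m}}(\mathcal{S}W)=H^i\big(\Gamma_{\mathfrak{m}}(\mathcal{S}I^{\bullet})\big)=H^i\big(\mathcal{S}\,\Gamma_{\mathfrak{m}}(I^{\bullet})\big)=\mathcal{S}\,H^i_{\mathfrak{m}}(W)$. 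Iterating gives $H^i_{\mathfrak{m}}(\mathcal{S}^nW)\cong\mathcal{S}^n H^i_{\mathfrak{m}}(W)$ for all $n\geq 0$.

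To finish, I would use the elementary observation that for any non-negatively graded abelian group $M$ one has $\mathcal{S}^nM=0$ if and only if $M_m=0$ for all $m\geq n$, i.e.\ if and only if $n>\deg M$. Since $\mathcal{S}$ preserves the class of $\FI$-modules presented in finite degrees, applying the first fact to $\mathcal{S}^nV$ gives that $\mathcal{S}^nV$ is semi-induced if and only if $H^i_{\mathfrak{m}}(\mathcal{S}^nV)=0$ for all $i$; by the second fact this is equivalent to $\mathcal{S}^n H^i_{\mathfrak{m}}(V)=0$ for all $i$, hence to $n>\deg H^i_{\mathfrak{m}}(V)=h^i(V)$ for all $i$, hence (the maximum being attained) to $n>h^{\max}(V)$. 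The main obstacle is the converse half of the first fact — that vanishing of all local cohomology forces a semi-induced filtration — which is a genuine structural theorem about $\FI$-modules and is precisely what makes the present statement a corollary rather than a formality; the second fact is routine once one notices that $\mathcal{S}$ has both adjoints, and the final assembly is pure bookkeeping with degrees.
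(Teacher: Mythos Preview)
The paper does not prove this proposition; it is simply quoted as \cite[Cor.~2.13]{CMNR}, with a blanket pointer to \cite[Section~3]{CMNR} for the proofs of all the properties collected in that subsection. There is therefore no argument in the present paper to compare yours against.

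Your reconstruction is the standard one and is correct in outline, with one small wrinkle. You write that $\mathcal{S}$ ``admits a left adjoint \ldots\ and therefore preserves injective objects,'' but the existence of a left adjoint alone does not give this; one needs the left adjoint to be \emph{exact}. That does hold here: the left adjoint $L$ to $\mathcal{S}$ is given on objects by $(LV)_T\cong\bigoplus_{t\in T}V_{T\setminus\{t\}}$ --- precisely the functor $\Ind^{S_n}_{S_{n-1}}$ appearing in Remark~\ref{TWOMAPS} --- which is visibly exact, so the step is easily repaired. With that in place your two facts combine exactly as you describe, and you are right that the substantive content sits in the converse direction of the first fact (vanishing of all local cohomology forces a semi-induced filtration), which is the structural theorem one cites from \cite{CMNR} or Nagpal.
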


The natural transformation $\text{id}_{\FI}\rightarrow \sqcup^{\star}$ induces a natural transformation $\iota:\text{id}_{\Mod_{\FI}}\rightarrow\mathcal{S}$.  This is a natural map of $\FI$-modules $\iota: V\mapsto \mathcal{S}V$  which, for every finite set $S$, takes $V_S$ to $(\mathcal{S} V)_S=V_{S\cup\{\star\}}$ via the map corresponding to the inclusion $\iota_S: S\hookrightarrow S\cup\{\star\}$.
The derivative functor $\Delta:\Mod_{\FI}\rightarrow\Mod_{\FI}$ takes an $\FI$-module $V$ to  the cokernel $\Delta V:=\text{coker}(V\xrightarrow\iota \mathcal{S} V)$. We denote by $\Delta^n$ is the $n$-fold iterate of $\Delta$.  An $\FI$-module $V$ is {\it torsion} if all its elements are torsion.  An element $x\in V(S)$ is  {\it torsion} if there is an injection $f:S\rightarrow T$ such that $f_*(x)=0$.

\begin{definition}  
The {\it stable degree}  of an $\FI$-module $V$ is  $\delta(V):=\min \{n\geq -1: \Delta^{n+1} V\text{ is torsion}\}$.\end{definition}

 

For a finitely generated $\FI$-module $V$ over a field $\mathbb{F}$, the  dimensions $\dim_{\mathbb{F}} V_n$  are eventually equal to a polynomial in $n$ (\cite[Theorem B]{CEFN}). The   stable degree  $\delta(V)$ of a finitely generated $\FI$-module $V$  corresponds to the degree of this polynomial and the  local degree $h^{\text{max}}(V)$ controls when these dimensions become equal to this polynomial.

\begin{prop}[Prop 2.14 \cite{CMNR}]\label{POLY} Suppose $\mathbb{F}$ is a field, and let $V$ be an $\FI$-module over $\mathbb{F}$ which is presented in finite degrees and with $V_n$ finite dimensional for all $n$. Then there exists an integer-valued polynomial $p \in\mathbb{Q}[X]$ of degree $\delta(V)$ such that $dim_\mathbb{F} V_n = p(n)$ for $n > h^{\text{max}}(V)$.
\end{prop}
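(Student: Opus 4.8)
The plan is to reduce the statement to the two structural facts already recorded in the excerpt: first, that $\mathcal{S}^n V$ is semi-induced precisely when $n > h^{\mathrm{max}}(V)$ (Proposition \ref{LOCALDEG}); and second, that the stable degree $\delta(V)$ is the least $n$ with $\Delta^{n+1}V$ torsion. So I would first dispose of the semi-induced case and then bootstrap the general case by induction on the number of shifts needed.

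\medskip

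\noindent\textbf{Step 1: the semi-induced case.} If $W$ is a semi-induced $\FI$-module over $\mathbb{F}$ with each $W_n$ finite-dimensional, then $W$ has a finite filtration with quotients of the form $\mathcal{I}(U)$ for $\FB$-modules $U$; since $\dim_{\mathbb{F}}$ is additive on short exact sequences it suffices to treat $W = \mathcal{I}(U)$. For an induced module one has $\mathcal{I}(U)_n = \bigoplus_{m \le n} \mathrm{Ind}_{S_m \times S_{n-m}}^{S_n}(U_m \boxtimes \mathbf{triv})$, so $\dim_{\mathbb{F}} \mathcal{I}(U)_n = \sum_{m} \binom{n}{m}\dim_{\mathbb{F}} U_m$, which is an honest polynomial in $n$ for \emph{all} $n \ge 0$, of degree equal to $\max\{m : U_m \ne 0\}$. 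Summing over the filtration quotients gives $\dim_{\mathbb{F}} W_n = q(n)$ for all $n$, with $\deg q = \delta(W)$; here one uses that $\Delta$ of an induced module is induced (shifting by one), so $\delta$ of a semi-induced module is read off as the top degree appearing in the filtration quotients, matching $\deg q$. In particular $h^{\mathrm{max}}(W) = -1$ and the conclusion holds.

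\medskip

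\noindent\textbf{Step 2: reduce to the semi-induced case via shifting.} For general $V$ presented in finite degrees with $V_n$ finite-dimensional, set $N := h^{\mathrm{max}}(V)$, so $\mathcal{S}^{N+1}V$ is semi-induced by Proposition \ref{LOCALDEG}. By Step 1, $\dim_{\mathbb{F}}(\mathcal{S}^{N+1}V)_n = \dim_{\mathbb{F}} V_{n+N+1}$ equals a polynomial $p$ for all $n \ge 0$, i.e. $\dim_{\mathbb{F}} V_m = p(m - N - 1)$ for all $m \ge N+1 = h^{\mathrm{max}}(V) + 1 > h^{\mathrm{max}}(V)$, which is the asserted range after renaming the polynomial. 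It remains to identify the degree of $p$ with $\delta(V)$: the shift functor $\mathcal{S}$ does not change the stable degree (one checks $\Delta$ and $\mathcal{S}$ commute and $\mathcal{S}$ preserves the torsion subcategory, or simply that $\delta$ is invariant under shift since shifting changes dimensions only by reindexing), so $\delta(V) = \delta(\mathcal{S}^{N+1}V) = \deg p$ by Step 1. This completes the argument.

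\medskip

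\noindent\textbf{Main obstacle.} The routine-but-essential point to get exactly right is the bookkeeping in Step 1: verifying that $\dim_{\mathbb{F}} \mathcal{I}(U)_n$ is \emph{polynomial for all $n$} (not just eventually), with degree equal to the top nonvanishing degree of $U$, and then that this top degree is exactly $\delta(\mathcal{I}(U))$ — equivalently that $\Delta$ shifts the relevant grading down by one and kills exactly what it should. The other delicate point is the invariance $\delta(\mathcal{S}V) = \delta(V)$ and the compatibility $\mathcal{S}\Delta \cong \Delta\mathcal{S}$ together with $\mathcal{S}$ preserving torsion; these are formal consequences of the definitions of $\mathcal{S}$, $\iota$, $\Delta$ and of ``torsion'' given in the excerpt, but they must be invoked cleanly so the degree identification is airtight.
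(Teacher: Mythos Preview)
The paper does not give its own proof of this proposition: it is quoted verbatim from \cite{CMNR} (as Proposition~2.14 there) in the preliminaries of Section~\ref{FIMod}, with no argument supplied. So there is nothing in the present paper to compare your proof against.

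For what it is worth, your outline is the standard route to this result and is essentially correct. One point to tighten in Step~1: you need to know that the generating $\FB$-module $U$ vanishes in all sufficiently large degrees, since otherwise $\sum_m \binom{n}{m}\dim_{\mathbb{F}} U_m$ is not a polynomial in $n$. This is guaranteed in your application because $\mathcal{S}^{N+1}V$ inherits finite presentation degree from $V$, and for a semi-induced module one has $t_0=\delta<\infty$ (exactly Proposition~\ref{FISHARP}), which bounds the top nonzero degree of each $U^{(i)}$ in the filtration. The invariance $\delta(\mathcal{S}V)=\delta(V)$ you invoke in Step~2 is \cite[Prop.~2.9(b)]{CMNR}, and is in fact used in the paper in the proof of Proposition~\ref{STABLERAN}.
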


It turns out that together, the stable degree and the local degree   behave well under taking kernels and cokernels and in finite filtrations.
 
 \begin{prop}[Prop 3.2 \& 3.3 \cite{CMNR}]\label{KERCOKER}\ 
 \begin{itemize}
 \item[(1)] Suppose the $\FI$-module $V$ has a finite filtration $V = F_ 0\supset\ldots\supset F_k=0$ and let $N_i = F_i/F_{i+1}$. Then $\delta(V) = \max_i\delta(N_i)$ and $h^{\text{max}}(V) \leq \max_i h^{\text{max}}(N_i)$.
 \item[(2)] Let $f:V\rightarrow W$ be a map of $\FI$-modules presented in  finite degrees. Then we have the following:
  $$\delta(\ker f)\leq \delta(V);\text{\hspace{0.5cm}}\delta(\text{coker} f)\leq \delta(W);$$ 
 $$h^{\text{max}}(\ker f), h^{\text{max}}(\text{coker} f)\leq \max\big(2\delta(V)-2, h^{\text{max}}(V),h^{\text{max}}(W)\big).$$ 
 \end{itemize}
 \end{prop}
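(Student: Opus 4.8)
The plan is to establish Proposition~\ref{KERCOKER} by a standard induction that reduces everything to the case of a short exact sequence, and to handle the short exact sequence by exploiting exactness of the shift and derivative functors together with the known behaviour of torsion modules.

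\textbf{Step 1: Behaviour of $\delta$ and $h^{\max}$ on short exact sequences.} First I would record that $\mathcal{S}$ is exact (it is precomposition with the fully faithful embedding $\sqcup^\star$), and that the derivative $\Delta$ is right-exact in general, with a kernel controlled by torsion: if $0\to A\to B\to C\to 0$ is exact then applying the snake lemma to the ladder given by $\iota$ produces an exact sequence $0\to \ker(\iota_A)\to\cdots$ whose terms after $\Delta A,\Delta B,\Delta C$ involve only subquotients of $A,B,C$ shifted once. Iterating, one sees $\Delta^{n} A$, $\Delta^n B$, $\Delta^n C$ fit in exact sequences up to torsion error terms. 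From the definition $\delta(V)=\min\{n\ge-1:\Delta^{n+1}V\text{ torsion}\}$ and the fact that an extension of torsion modules is torsion (and submodules/quotients of torsion modules are torsion), this yields $\delta(B)=\max(\delta(A),\delta(C))$ for a short exact sequence, which is the key additivity. The filtration statement (1) for $\delta$ then follows by induction on the length $k$ of the filtration, applied to $0\to F_1\to F_0\to N_0\to 0$.

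\textbf{Step 2: The local degree in short exact sequences and filtrations.} For $h^{\max}$ I would use Proposition~\ref{LOCALDEG}: $h^{\max}(V)$ is the threshold past which $\mathcal{S}^n V$ becomes semi-induced. Given $0\to A\to B\to C\to 0$ with all terms presented in finite degrees, apply $\mathcal{S}^n$ (exact) to get $0\to \mathcal{S}^nA\to\mathcal{S}^nB\to\mathcal{S}^nC\to 0$; for $n>\max(h^{\max}(A),h^{\max}(C))$ both outer terms are semi-induced, and an extension of semi-induced modules is semi-induced (concatenate the induced filtrations), so $\mathcal{S}^nB$ is semi-induced, giving $h^{\max}(B)\le\max(h^{\max}(A),h^{\max}(C))$. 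The filtration bound $h^{\max}(V)\le\max_i h^{\max}(N_i)$ in (1) is again induction on $k$.

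\textbf{Step 3: Kernels and cokernels of a map $f:V\to W$.} Factor $f$ as $V\twoheadrightarrow \im f\hookrightarrow W$. For $\delta$: $\ker f\subseteq V$ and $\im f$ a quotient of $V$, so by Step 1 $\delta(\ker f)\le\delta(V)$ and $\delta(\im f)\le\delta(V)$; and $\operatorname{coker} f=W/\im f$ is a quotient of $W$, so $\delta(\operatorname{coker} f)\le\delta(W)$. The genuinely delicate part — and the main obstacle — is the local degree bound $h^{\max}(\ker f),h^{\max}(\operatorname{coker} f)\le\max(2\delta(V)-2,h^{\max}(V),h^{\max}(W))$, where the term $2\delta(V)-2$ appears and is \emph{not} simply a max of the inputs' local degrees. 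Here I would not reprove it from scratch but cite \cite[Prop.~3.2 \& 3.3]{CMNR} directly, since the statement is quoted verbatim from there; if a self-contained argument were wanted, the idea is that a sub- or quotient-module of a semi-induced module need not be semi-induced, but its failure to be so is measured by $\FI$-homology in degrees bounded in terms of $\delta$, and one must shift enough (roughly $2\delta-2$) to kill the relevant $H_1^{\FI}$ obstruction — this is exactly the subtle point that \cite{CMNR} isolate, using their analysis of how $H_i^{\FI}$, $\delta$, and semi-inducedness interact. Concretely, apply $\mathcal{S}^n$ to the short exact sequences $0\to\ker f\to V\to\im f\to 0$ and $0\to\im f\to W\to\operatorname{coker} f\to 0$, observe that for large $n$ the middle and one outer term are semi-induced, and invoke the fact that the ``third'' term in a short exact sequence with two semi-induced terms becomes semi-induced after one further shift provided the relevant homological degree (bounded by $2\delta-2$) has been passed. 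Thus the proof is essentially a citation of \cite{CMNR}, assembled so that the filtration statement (1) follows from the two-term case by induction and the map statement (2) follows by factoring through the image.
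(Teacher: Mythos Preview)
The paper does not prove this proposition at all; it is quoted verbatim from \cite[Prop.~3.2 \& 3.3]{CMNR}, with the blanket remark in Section~\ref{FIMod} that ``proofs of these statements can be found in \cite[Section 3]{CMNR}.'' Your proposal ultimately does exactly the same thing --- citing \cite{CMNR} for the only non-trivial part, the $2\delta(V)-2$ term in the local-degree bound --- while additionally sketching correct arguments for the easier filtration statement and the $\delta$-inequalities; so your approach is the paper's approach, just with more detail supplied.
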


Furthermore, the stable and local degrees are easier to control in spectral sequence arguments. In particular they allow us to obtain linear stable ranges in the  ``Type A spectral sequence arguments''.

\begin{prop}[Prop 4.1 \cite{CMNR}] \label{SPECTRAL} Let $E_r^{p,q}$ be a cohomologically graded first quadrant spectral sequence of $\FI$-modules converging to $M^{p+q}$. Suppose that for some page $d$, the $\FI$-modules $E_d^{p,q}$ are presented in finite degrees, and set $D_k=\max_{p+q=k} \delta(E_d^{p,q})$ and $\eta_k=\max_{p+q=k} h^{\text{max}}(E_d^{p,q})$. Then we have the following:
\begin{itemize}
\item[(1)] $\delta(M^k)\leq D_k$
\item[(2)] $h^{\text{max}}(M^k)\leq \max \big(\max_{\ell\leq k+s-d} \eta_\ell, \max_{\ell\leq 2k-d+1} (2D_\ell-2)\big)$
\end{itemize}
where $s=\max(k+2,d)$.
\end{prop}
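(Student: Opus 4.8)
The plan is to run the standard ``propagate the invariants page by page'' argument, feeding everything through Proposition~\ref{KERCOKER}; beyond that proposition the only extra ingredients needed are that $\FI$-modules presented in finite degrees form an abelian category and that the spectral sequence, being first quadrant and convergent, has finite filtrations on the $M^k$. First I would record the two-step description of turning the page (cohomological convention $d_r\colon E_r^{p,q}\to E_r^{p+r,q-r+1}$): put $Z_r^{p,q}=\ker\big(d_r^{p,q}\big)$, observe $\im\big(d_r^{p-r,q+r-1}\big)\subseteq Z_r^{p,q}$ since $d_r^2=0$, and then $E_{r+1}^{p,q}=\text{coker}\big(d_r\colon E_r^{p-r,q+r-1}\to Z_r^{p,q}\big)$. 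Because the category of $\FI$-modules presented in finite degrees is abelian, an induction on $r\ge d$ shows every $E_r^{p,q}$ is presented in finite degrees, so $\delta$ and $h^{\text{max}}$ are defined for all of them and Proposition~\ref{KERCOKER}(2) applies at each step.

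For part (1) I would first note $\delta(E_r^{p,q})$ is non-increasing in $r$: by Proposition~\ref{KERCOKER}(2) the kernel step gives $\delta(Z_r^{p,q})\le\delta(E_r^{p,q})$ and the cokernel step gives $\delta(E_{r+1}^{p,q})\le\delta(Z_r^{p,q})$, hence $\delta(E_\infty^{p,q})\le\delta(E_d^{p,q})$. Each $M^k$ carries a finite filtration with associated graded pieces $E_\infty^{p,k-p}$, $0\le p\le k$, so Proposition~\ref{KERCOKER}(1) yields $\delta(M^k)=\max_p\delta(E_\infty^{p,k-p})\le\max_{p+q=k}\delta(E_d^{p,q})=D_k$.

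For part (2) I would iterate the $h^{\text{max}}$-part of Proposition~\ref{KERCOKER}(2). Chaining the kernel and cokernel estimates and bounding each stable degree occurring there by $\delta(E_r^{a,b})\le\delta(E_d^{a,b})\le D_{a+b}$, I get, writing $\eta_k^{(r)}:=\max_{p+q=k}h^{\text{max}}(E_r^{p,q})$,
\[
\eta_k^{(r+1)}\ \le\ \max\big(\eta_{k-1}^{(r)},\ \eta_k^{(r)},\ \eta_{k+1}^{(r)},\ 2D_{k-1}-2,\ 2D_k-2\big),
\]
since the three $E_r$-entries appearing on the right sit in total degrees $k-1$, $k$, $k+1$. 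Starting from $\eta_k^{(d)}=\eta_k$, a short induction on $j$ gives $\eta_k^{(d+j)}\le\max\big(\max_{|\ell-k|\le j}\eta_\ell,\ \max_{k-j\le\ell\le k+j-1}(2D_\ell-2)\big)$. In a first-quadrant spectral sequence $E_r^{p,q}=E_\infty^{p,q}$ as soon as $r>\max(p,q+1)$, and for all $(p,q)$ with $p+q=k$ this holds by page $k+2$; hence total degree $k$ has stabilized after $j=s-d$ steps with $s=\max(k+2,d)$. Feeding this into $h^{\text{max}}(M^k)\le\max_p h^{\text{max}}(E_\infty^{p,k-p})=\eta_k^{(\infty)}$ (Proposition~\ref{KERCOKER}(1)), and enlarging the two index ranges to $\ell\le k+s-d$ and $\ell\le 2k-d+1$ (the latter is exactly $k+(s-d)-1$ when $s=k+2$, and is a harmless over-estimate when $s=d$), produces the stated bound on $h^{\text{max}}(M^k)$.

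The one place I expect to need care is this last step: checking that $s=\max(k+2,d)$ really is the page by which total degree $k$ has stabilized, and that the book-kept index $k+(s-d)-1$ collapses to the advertised $2k-d+1$ (weakening the bound only when $s=d$). Everything else is formal, flowing from Proposition~\ref{KERCOKER} together with the abelian-category structure of $\FI$-modules presented in finite degrees and the finiteness of the convergence filtration.
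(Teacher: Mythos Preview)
The paper does not give its own proof of this proposition; it is quoted verbatim from \cite[Prop.~4.1]{CMNR}, just as Propositions~\ref{KERCOKER} and~\ref{BOUNDS} are. So there is nothing in the paper to compare against beyond the citation.

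Your argument is correct and is essentially the intended one: iterate Proposition~\ref{KERCOKER}(2) page by page, using that differentials in a cohomological first-quadrant spectral sequence shift total degree by $+1$, so the recursion
\[
\eta_k^{(r+1)}\le\max\bigl(\eta_{k-1}^{(r)},\eta_k^{(r)},\eta_{k+1}^{(r)},\,2D_{k-1}-2,\,2D_k-2\bigr)
\]
holds; then unwind it for $j=s-d$ steps, where $s=\max(k+2,d)$ is the page by which every entry of total degree $k$ has stabilized (since $E_r^{p,q}=E_{r+1}^{p,q}$ once $r>\max(p,q+1)\le k+1$). Your bookkeeping of the index ranges is right: after $j$ steps the $\eta$-window is $|\ell-k|\le j$ and the $D$-window is $k-j\le\ell\le k+j-1$; with $j=s-d$ and $s=k+2$ these become $\ell\le 2k-d+2$ and $\ell\le 2k-d+1$, matching the statement; when $s=d$ the $D$-window is empty and the stated range $\ell\le 2k-d+1\le k-1$ is a harmless enlargement. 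The filtration step and part~(1) are immediate from Proposition~\ref{KERCOKER}(1).
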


Finally, the stable degree and the local degree control the presentation degree of an $\FI$-module (and viceversa \cite[Prop 3.1 (1)\&(2)]{CMNR}):

   \begin{prop}[Prop 3.1 \cite{CMNR}]\label{BOUNDS}  Let $V$ be an $\FI$-module presented in finite degree. Then we have the following: $$t_0(V)\leq \delta(V)+h^{\text{max}}(V)+1\text{ \hspace{0.5cm} and  \hspace{0.5cm} }t_1(V)\leq \delta(V)+2h^{\text{max}}(V)+2.$$
 \end{prop}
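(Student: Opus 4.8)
The plan is to reduce both inequalities to two elementary facts about how $\FI$-homology interacts with the shift functor $\mathcal{S}$. The first is a \emph{shift estimate}: for every $\FI$-module $W$ presented in finite degrees, $t_0(W)\le t_0(\mathcal{S}W)+1$. This is a direct translation argument: if $\mathcal{S}W$ is generated in degrees $\le e$, then expressing any class of $W_n\cong(\mathcal{S}W)_{n-1}$ in terms of the generators of $\mathcal{S}W$ exhibits it as the image of some element of $W_S$ with $|S|\le e+1$, so $W$ is generated in degrees $\le e+1$. The second ingredient is the standard fact that a semi-induced $\FI$-module $Y$ is acyclic for $H_i^{\FI}$ in degrees $i\ge 1$ and satisfies $t_0(Y)=\delta(Y)$; this follows by reducing along a finite filtration to the induced modules $\mathcal{I}(W)$, which are $\FI$-acyclic and whose values have dimension a polynomial of degree $\deg W=t_0(\mathcal{I}(W))$. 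I will also use that $\mathcal{S}$ is exact, carries induced modules to induced modules without increasing the generation degree, and satisfies $\delta(\mathcal{S}W)=\delta(W)$, all recorded in \cite{CMNR}. Granting these, the first inequality is immediate: setting $a=h^{\text{max}}(V)+1$, Proposition \ref{LOCALDEG} gives that $\mathcal{S}^aV$ is semi-induced, so $t_0(\mathcal{S}^aV)=\delta(\mathcal{S}^aV)=\delta(V)$, and $a$ applications of the shift estimate yield $t_0(V)\le t_0(\mathcal{S}^aV)+a=\delta(V)+h^{\text{max}}(V)+1$.

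For the second inequality I would choose a surjection $F\twoheadrightarrow V$ from an induced $\FI$-module $F$ generated in degrees $\le t_0(V)$ (for instance $F=\mathcal{I}(W)$, where $W$ is the $\FB$-truncation of $V$ in degrees $\le t_0(V)$), and put $K=\ker(F\to V)$, which is again presented in finite degrees since that category is abelian. Because $H_1^{\FI}(F)=0$, the long exact sequence of $\FI$-homology gives $t_1(V)\le t_0(K)$. Now apply the exact functor $\mathcal{S}^a$, with $a=h^{\text{max}}(V)+1$, to $0\to K\to F\to V\to 0$: both $\mathcal{S}^aF$ and, by Proposition \ref{LOCALDEG}, $\mathcal{S}^aV$ are semi-induced, hence acyclic for $H^{\FI}_{\ge 1}$, so $H_0^{\FI}(\mathcal{S}^aK)$ embeds into $H_0^{\FI}(\mathcal{S}^aF)$, which is concentrated in degrees $\le t_0(\mathcal{S}^aF)\le t_0(F)\le t_0(V)$. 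Hence $t_0(\mathcal{S}^aK)\le t_0(V)$, and $a$ applications of the shift estimate give $t_0(K)\le t_0(\mathcal{S}^aK)+a\le t_0(V)+h^{\text{max}}(V)+1$. Combining this with the first inequality,
$$t_1(V)\le t_0(K)\le t_0(V)+h^{\text{max}}(V)+1\le \delta(V)+2h^{\text{max}}(V)+2.$$

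The only numeric content is the iteration of the shift estimate, and the coefficient $2$ on $h^{\text{max}}(V)$ in the bound for $t_1$ appears precisely because one pays one factor of $h^{\text{max}}(V)+1$ to pass from $V$ to its first syzygy $K$ and a second factor to bound $t_0(K)$ back in terms of $t_0(V)$. The part demanding the most care --- and the only genuinely nontrivial input --- is the homological lemma that semi-induced (equivalently induced) $\FI$-modules are acyclic for positive $\FI$-homology and have $t_0=\delta$, together with the bookkeeping that keeps every module appearing in the argument inside the class of $\FI$-modules presented in finite degrees, so that Proposition \ref{LOCALDEG} and the long exact sequences remain applicable. Since all of these are available from \cite{CMNR} (and \cite{CE}), the argument itself is short once they are invoked.
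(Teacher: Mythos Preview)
Your argument is correct. Note, however, that the paper does not give its own proof of this proposition: it is simply quoted from \cite{CMNR} (Proposition~3.1 there), so there is nothing in the present paper to compare against directly. That said, the line you take---shift $a=h^{\text{max}}(V)+1$ times so that $\mathcal{S}^aV$ is semi-induced (Proposition~\ref{LOCALDEG}), use that semi-induced modules are $H^{\FI}_{\ge 1}$-acyclic with $t_0=\delta$ (Proposition~\ref{FISHARP}), then iterate the elementary shift estimate $t_0(W)\le t_0(\mathcal{S}W)+1$ to get the first bound, and for the second bound pass to the first syzygy $K$ of an induced cover $F\twoheadrightarrow V$ and repeat---is exactly the strategy of \cite{CMNR}. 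The only steps worth stating carefully are (i) the injection $H_0^{\FI}(\mathcal{S}^aK)\hookrightarrow H_0^{\FI}(\mathcal{S}^aF)$, which you correctly deduce from $H_1^{\FI}(\mathcal{S}^aV)=0$, and (ii) the inequality $t_0(\mathcal{S}^aF)\le t_0(F)$, which holds because shifts of induced modules are induced of no larger generation degree; both are handled correctly in your write-up.
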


\subsubsection*{$\FI\#$-modules}

Let $\FI\#$ denote the category with objects finite based sets and with  morphisms given by maps of based sets that are injective away from the basepoints:  the preimage of all elements except possibly the base point have cardinality at most one. This category is isomorphic to the original one described in \cite[Def 4.1.1]{CEF}.  

An {\it $\FI\#$-module} is a functor from $\FI\#$ to $\Mod_\mathbb{Z}$.  An $\FI\#$-module simultaneously carries an $\FI$- and an $\FI^{op}$-module structure in a compatible way.  Church--Ellenberg--Farb \cite[Theorem 4.1.5]{CEF} proved that the restriction of an $\FI\#$-module to $\FI$ is an induced $\FI$-module. Hence, for $\FI\#$-modules we have better control of the presentation degree.

\begin{prop}[Cor 2.13, Prop 3.1 \cite{CMNR}]\label{FISHARP}
If $V$ is a semi-induced $\FI$-module, then $h^{\text{max}}(V)=-1$.  Therefore,  $\delta (V)=t_0(V)$ and $t_1(V)\leq \delta (V)$.
\end{prop}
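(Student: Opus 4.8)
The plan is to deduce the statement directly from Propositions~\ref{LOCALDEG} and~\ref{BOUNDS}, so that after one routine observation the proof reduces to a short computation. The routine observation, needed in order to invoke those propositions, is that a semi-induced module $V$ is presented in finite degrees: an induced module $\mathcal{I}(W)$ with $W$ of finite degree is generated in degree $\deg W$ and has vanishing $\FI$-homology in positive degrees, hence is presented in finite degrees; and this property passes to finite extensions, since the long exact sequence in $\FI$-homology bounds $t_0$ and $t_1$ of the middle term of a short exact sequence in terms of those of the outer terms. Thus it holds for any module carrying a finite filtration with induced subquotients.

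The first claim, $h^{\text{max}}(V) = -1$, is then immediate from Proposition~\ref{LOCALDEG} applied with $n = 0$: since $\mathcal{S}^{0} V = V$ is semi-induced by hypothesis, the ``only if'' direction forces $0 > h^{\text{max}}(V)$, and as the local degree always lies in $\{-1, 0, 1, \dots\}$ we conclude $h^{\text{max}}(V) = -1$. (It is also worth recording, as an alternative route, that the shift functor $\mathcal{S}$ --- being precomposition with the exact endofunctor $\sqcup^{\star}$ of $\FI$ --- preserves semi-inducedness: from the explicit formula for the induction functor one has $\mathcal{S}\mathcal{I}(W) \cong \mathcal{I}(W) \oplus \mathcal{I}(W')$ with $W'_{S} := W_{S \sqcup \{\star\}}$, so $\mathcal{S}^{n}V$ is semi-induced for every $n \geq 0$, and Proposition~\ref{LOCALDEG} again yields $h^{\text{max}}(V) \leq -1$.)

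Finally I would read off the two consequences. Substituting $h^{\text{max}}(V) = -1$ into Proposition~\ref{BOUNDS} gives
\[
t_{0}(V) \;\leq\; \delta(V) + h^{\text{max}}(V) + 1 \;=\; \delta(V), \qquad\qquad t_{1}(V) \;\leq\; \delta(V) + 2\,h^{\text{max}}(V) + 2 \;=\; \delta(V),
\]
and the second of these is exactly the asserted inequality $t_{1}(V) \leq \delta(V)$. For the equality $\delta(V) = t_{0}(V)$ it remains to pair $t_{0}(V) \leq \delta(V)$ with the reverse bound $\delta(V) \leq t_{0}(V)$ --- the stable degree is always at most the generation degree --- which is the converse half of \cite[Prop.~3.1]{CMNR}. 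I do not anticipate any genuine obstacle: all of the substance already sits inside the two cited propositions, and the only thing to check by hand is the presentation-finiteness (and, for the alternative argument, the shift-invariance) of semi-induced modules, both of which fall straight out of the colimit/direct-sum description of the induction functor $\mathcal{I}$.
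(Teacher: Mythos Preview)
Your derivation from Propositions~\ref{LOCALDEG} and~\ref{BOUNDS} (together with the reverse bound $\delta(V)\leq t_0(V)$ from \cite[Prop.~3.1]{CMNR}) is exactly what the paper's citation to \cite[Cor.~2.13, Prop.~3.1]{CMNR} amounts to, and the logic from $h^{\text{max}}(V)=-1$ onward is correct.

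The one genuine flaw is your ``routine observation'': it is \emph{not} true that every semi-induced module is presented in finite degrees. For instance, if $W$ is an $\FB$-module with $W_n\neq 0$ for all $n$, then $\mathcal{I}(W)$ is induced (hence semi-induced) but has $t_0(\mathcal{I}(W))=\deg H_0^{\FI}(\mathcal{I}(W))=\deg W=\infty$. Your parenthetical ``with $W$ of finite degree'' is precisely the extra hypothesis you would need, but nothing in the paper's definition of semi-induced guarantees it. So you cannot feed an arbitrary semi-induced $V$ into Proposition~\ref{LOCALDEG} as stated there. In \cite{CMNR} the conclusion $h^{\text{max}}(V)=-1$ for semi-induced $V$ is obtained more directly, from the vanishing of local cohomology on induced modules, which needs no presentation-finiteness; and in any case the paper only ever applies Proposition~\ref{FISHARP} to finitely generated $\FI\#$-modules, where your argument goes through without difficulty. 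So the gap is harmless in context, but your proof as written does not establish the proposition in the generality stated.
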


For a finitely generated $\FI$-module $V$ defined over $\mathbb{Q}$, we know that the sequence $\{ V_n\}_n$ satisfies representation stability. Moreover we can  obtain upper bounds for the stable range in terms of the local and stable degree of $V$.

\begin{prop}\label{STABLERAN} Let $V$ be a finitely generated $\FI$-module defined over $\mathbb{Q}$. Then the sequence $\{V_n\}_n$ satisfies representation stability for $n\geq 2\delta(V)+ h^{\text{max}}(V)+1$.  If $V$ is an $\FI\#$-module defined over $\mathbb{Q}$,  then the sequence $\{V_n\}_n$ satisfies representation stability for $n\geq 2\delta(V)$.
\end{prop}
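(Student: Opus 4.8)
The plan is to reduce the general finitely generated case to the semi-induced case by a standard truncation-and-filtration argument, and then handle the semi-induced case directly. First I would recall the relationship between representation stability and the $\FI$-module invariants: by \cite[Theorem E]{PUTCentral} (quoted in Section \ref{GENSEC}), a finitely generated $\FI$-module $V$ over $\mathbb{Q}$ satisfies representation stability for $n \geq 2(\mathrm{prd}(V)+1)+1$, and by Proposition \ref{BOUNDS} we have $\mathrm{prd}(V) = \max(t_0(V),t_1(V)) \leq \delta(V)+2h^{\text{max}}(V)+2$. Plugging this in gives a bound of roughly $2\delta(V)+4h^{\text{max}}(V)+7$, which is far worse than the claimed $2\delta(V)+h^{\text{max}}(V)+1$. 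So the crude substitution is not enough, and the real content is to exploit the finer structure that $h^{\text{max}}(V)$ encodes, namely that $\mathcal{S}^n V$ is semi-induced for $n > h^{\text{max}}(V)$ (Proposition \ref{LOCALDEG}).

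The key steps, in order, would be: (i) Handle the $\FI\#$-case first. If $V$ is an $\FI\#$-module over $\mathbb{Q}$, then by \cite[Theorem 4.1.5]{CEF} its restriction to $\FI$ is induced, hence semi-induced, so $h^{\text{max}}(V) = -1$ and $\delta(V) = t_0(V)$ by Proposition \ref{FISHARP}, with $t_1(V) \leq \delta(V)$. For induced (indeed semi-induced) $\FI$-modules over $\mathbb{Q}$ one knows the multiplicity decomposition explicitly — an induced module $\mathcal{I}(W)$ with $\deg W = d$ stabilizes for $n \geq 2d$ — and by Proposition \ref{KERCOKER}(1) a finite semi-induced filtration does not hurt this, giving representation stability for $n \geq 2\delta(V)$. (ii) For the general finitely generated $V$, use the shift: consider the filtration of $V$ coming from $\iota: V \to \mathcal{S}V \to \mathcal{S}^2 V \to \cdots$. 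Since $V$ is finitely generated, it is presented in finite degrees, so by Proposition \ref{LOCALDEG} the module $\mathcal{S}^{h} V$ is semi-induced for $h = h^{\text{max}}(V)+1$. The standard fact (this is the technical heart) is that $V$ and $\mathcal{S}V$ agree in high enough degrees as $S_n$-representations once one understands the torsion and the cokernel $\Delta V$; iterating, the representation-stable behavior of $V_n$ for large $n$ is governed by that of $(\mathcal{S}^{h}V)_{n-h}$, which is semi-induced. Tracking the shift by $h^{\text{max}}(V)+1$ against the $2\delta$ bound for the semi-induced piece, and noting $\delta(\mathcal{S}^h V) = \delta(V)$, yields the stable range $n \geq 2\delta(V) + h^{\text{max}}(V) + 1$.

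The main obstacle I anticipate is step (ii): making precise the claim that the passage from $\mathcal{S}^h V$ back down to $V$ costs exactly $h^{\text{max}}(V)+1$ in the stable range, and no more. One has to control both the torsion kernel of each map $V \to \mathcal{S}V$ (which can disrupt the irreducible decomposition in low degrees but is, by definition of $\delta$ and the structure theory, supported in bounded degree) and the cokernels $\Delta^j V$, showing that all of these have already stabilized by the time the semi-induced shift has. I would lean on the five-term type exact sequences $0 \to (\text{torsion}) \to V \to \mathcal{S}V \to \Delta V \to 0$ together with Proposition \ref{KERCOKER} to bound $\delta$ and $h^{\text{max}}$ of the pieces, and then assemble the multiplicity-stability statements degree by degree. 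An alternative, possibly cleaner route for the general case is to cite the explicit weight/stability-degree machinery of \cite{CEF}: the stable degree $\delta(V)$ bounds the weight, and one can bound the stability degree of $V$ in terms of $\delta(V)$ and $h^{\text{max}}(V)$, after which \cite[Proposition 2.5.4 or Theorem 2.5.6]{CEF}-type statements give representation stability for $n$ past $\text{weight} + \text{stability degree}$; I would present whichever of these two arguments produces the cleaner constant.
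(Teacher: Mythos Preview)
Your core strategy is exactly the paper's: shift $V$ by $N = h^{\text{max}}(V)+1$ so that $\mathcal{S}^N V$ is semi-induced (over $\mathbb{Q}$, equivalently an $\FI\#$-module), apply the $\FI\#$ bound to the shifted module, and translate the index back. The paper carries this out in four lines: it records $t_0(\mathcal{S}^N V) = \delta(\mathcal{S}^N V) = \delta(V)$, invokes \cite[Corollary~4.1.8]{CEF} to obtain representation stability of $\{(\mathcal{S}^N V)_k\}_k$ for $k \geq 2\delta(V)$, and then reads off the range for $V$ directly from the identification $(\mathcal{S}^N V)_k = V_{N+k}$. No torsion/cokernel analysis, no five-term sequences, no weight/stability-degree bookkeeping.

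Where you diverge is in your anticipated ``main obstacle''. The paper does not analyze the chain $V \to \mathcal{S}V \to \cdots$ through kernels and cokernels $\Delta^j V$, nor does it fall back on the weight/stability-degree alternative; it treats the index shift as a tautology and stops. Your proposed detour is both unnecessary and, as written, problematic: the intermediate claim that ``$V$ and $\mathcal{S}V$ agree in high enough degrees as $S_n$-representations'' is false (for $V_n = \mathbb{Q}^n$ one has $\dim V_n = n \neq n+1 = \dim(\mathcal{S}V)_n$ for every $n$), so the five-term-sequence route you sketch would not assemble into the statement you want. The sentence you wrote immediately after --- that the representation-stable behavior of $V_n$ is governed by $(\mathcal{S}^h V)_{n-h}$ --- is the correct one, and is exactly what the paper uses directly, with your step~(i) supplied by citation to \cite[Corollary~4.1.8]{CEF} rather than by unpacking an induced filtration by hand.
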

\begin{proof}
Over $\mathbb{Q}$,  semi-induced modules are the same as $\FI\#$-modules (since $\FI\#$-modules are semi-simple).
By Proposition \ref{LOCALDEG}, if $n>h^{\text{max}}(V)$, then  $\mathcal{S}^n V$ is semi-induced.   Then for $N=h^{\text{max}}(V)+1$, the shifted $\FI$-module $\mathcal{S}^N V$  is an $\FI\#$-module defined over $\mathbb{Q}$ and  by Proposition \ref{FISHARP} it has  generation degree $t_0(\mathcal{S}^N V)=\delta(\mathcal{S}^N V)=\delta(V)$ (see also \cite[Prop 2.9(b)]{CMNR}). Therefore, by \cite[Corollary 4.1.8]{CEF} the sequence $\{(\mathcal{S}^N V)_k\}_k$ is representation stable for $k\geq 2\delta(V)$. Since $\big(\mathcal{S}^N V\big)_k=V_{N+k}$, this means that the sequence $\{V_n\}_n$   satisfies representation stability  for  $n\geq 2\delta(V)+N=2\delta(V)+h^{\text{max}}(V)+1$.
\end{proof}




\section { $\FI\mathsf{[G]}$-modules}

In \cite[Section 5]{JimenezRollandMCGasFI} we introduced the notion on an $\FI\mathsf{[G]}$-module in order to incorporate the action of a group $G$ on our sequences of $S_n$-representations. These  $\FI\mathsf{[G]}$-modules  allow us to construct new $\FI$-modules by taking cohomology with twisted coefficients and are key in the spectral sequence arguments below.



\begin{definition} Let $G$ be a group. An  {\it $\FI\mathsf{[G]}$-module $V$} is a functor from the category {$\FI$} to the category $\Mod_{\mathbb{Z}\mathsf{[G]}}$ of $G$-modules over $\mathbb{Z}$.   By forgetting the $G$-action we get an $\FI$-module and all the notions of degree from the previous section can be considered.
We call $V$  an {\it $\FI\mathsf{[G]}$-module over $\mathbb{F}$} if the functor $V:\Mod_{\FI}\rightarrow \Mod_{\mathbb{Z}\mathsf{[G]}}$ factors through $\Mod_{\mathbb{F}\mathsf{[G]}}\rightarrow\Mod_{\mathbb{Z}\mathsf{[G]}}$.
\end{definition}



Let $V$ be an $\FI\mathsf{[G]}$-module and consider a path connected space $X$ with fundamental group $G$. For each integer $p\geq 0$, the $pth$-cohomology $H^p(X;\_\_)$ of $X$  is  a  covariant functor from the category  $\Mod_{\FI\mathsf{[G]}}$ to the category $\Mod_{\FI}$. We now see how the bounds on the stable and local degree of $V$ provide  bounds for the $\FI\mathsf{[G]}$-module $H^p(X;\_\_)$.





\begin{prop}[Cohomology with coefficients in a $\FI\mathsf{[G]}$-module with finite $prd$]\label{LEMMA} 
Let $G$ be the fundamental group of a connected CW complex $X$  and let $V$ be  an $\FI\mathsf{[G]}$-module presented  in finite degrees with $\delta(V)\leq D$ and $h^{\text{max}}(V)\leq \eta$. Then for every $p\geq 0$, the $\FI$-module $H^p(X;V)$ is presented  in finite degrees with
$$\delta\big(H^p(X;V)\big)\leq D\text{\   \  and \  \  }h^{\text{max}}\big(H^p(X;V)\big)\leq \max(2D-2,\eta).$$
Furthermore, if $X$ has finitely many cells in each dimension and $V$ is a finitely generated $\FI\mathsf{[G]}$-module, then  $H^p(X;V)$ is  a finitely generated  $\FI$-module.
\end{prop}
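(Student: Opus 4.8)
The plan is to compute $H^p(X;V)$ via the cellular cochain complex of the universal cover $\widetilde{X}$, which is a complex of $\mathbb{Z}[G]$-modules, tensored appropriately with $V$. Since $V$ is a functor into $\Mod_{\mathbb{Z}[G]}$, applying $\Hom_{\mathbb{Z}[G]}(C_*(\widetilde{X}), V)$ in each $\FI$-degree produces a cochain complex of $\FI$-modules; its $p$-th cohomology is the $\FI$-module $H^p(X;V)$. The point is that each cochain group in this complex is built out of $V$ by operations that do not increase $\delta$ or $h^{\text{max}}$ in an uncontrolled way, and then the cohomology is a subquotient of such a cochain group, so Proposition \ref{KERCOKER} applies.

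First I would handle the cochain groups. In cohomological degree $q$, the term is $\Hom_{\mathbb{Z}[G]}(C_q(\widetilde{X}), V)$; if $X$ has $c_q$ many $q$-cells this is a product of $c_q$ copies of $V$ (as an $\FI$-module, forgetting the $G$-action), and more generally for $X$ with possibly infinitely many cells it is a product $\prod V$ indexed by the $q$-cells. A direct sum of copies of $V$ has the same $\delta$ and $h^{\text{max}}$ as $V$ by Proposition \ref{KERCOKER}(1) (a finite direct sum is a finite filtration with quotients $V$); for infinite products one must check that $\delta$ and $h^{\text{max}}$ are still bounded by those of $V$ — this follows because being torsion, being semi-induced after shifting, and "presented in finite degrees" can be checked after the relevant shift and because these properties are closed under products in the degree ranges involved (the shift functor and $\Delta$ commute with products, and a product of torsion modules supported in bounded degree is torsion in that degree). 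I would state this as a small lemma or cite the analogous argument from \cite{CMNR}/\cite{JimenezRollandMCGasFI}. So each cochain $\FI$-module $C^q$ has $\delta(C^q)\leq D$ and $h^{\text{max}}(C^q)\leq \eta$, and is presented in finite degrees.

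Next, $H^p(X;V)$ is the cohomology at spot $p$ of the cochain complex $C^\bullet$, i.e. $\ker(d^p)/\im(d^{p-1})$. Applying Proposition \ref{KERCOKER}(2) to $d^p: C^p\to C^{p+1}$ gives $\delta(\ker d^p)\leq \delta(C^p)\leq D$ and $h^{\text{max}}(\ker d^p)\leq \max(2D-2,\eta)$; it also shows $\ker d^p$ and the image $\im d^{p-1}$ are presented in finite degrees (so the quotient makes sense in the abelian category of such modules). Then $H^p(X;V)$ is a quotient of $\ker d^p$, so by Proposition \ref{KERCOKER}(2) again (applied to the surjection $\ker d^p \twoheadrightarrow H^p(X;V)$, whose cokernel is zero) its stable degree is $\leq \delta(\ker d^p)\leq D$, and $h^{\text{max}}(H^p(X;V)) \leq \max(2\delta(\ker d^p)-2, h^{\text{max}}(\ker d^p)) \leq \max(2D-2,\eta)$, using that $2D-2$ dominates and $\delta(\ker d^p)\le D$. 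This yields the two displayed bounds. The main obstacle I anticipate is the bookkeeping around infinite products of copies of $V$ in the non-finite-type case — making sure "presented in finite degrees", $\delta$, and $h^{\text{max}}$ genuinely pass to arbitrary products — rather than anything in the spectral-sequence-free part of the argument; in the finitely-many-cells-per-dimension case this disappears since the products are finite.

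For the last sentence: if $X$ has finitely many cells in each dimension and $V$ is finitely generated, then each $C^q$ is a finite direct sum of copies of $V$, hence finitely generated; since finite generation of $\FI$-modules over the Noetherian ring (here the base is $\mathbb{Z}$, or $\mathbb{F}$, which is Noetherian) is preserved under passing to submodules and quotients by the local Noetherianity recalled in Section \ref{FIMod}, $\ker d^p$ is finitely generated and hence so is its quotient $H^p(X;V)$. $\hfill\rule{0.5em}{0.5em}$
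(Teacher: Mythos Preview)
Your plan matches the paper's proof almost exactly: realize $H^p(X;V)$ via the cellular cochain complex $C^\bullet(X;V)$ with $C^p(X;V)\cong\prod_{\alpha\in\mathcal{A}_p}V$, observe that these products inherit $\delta\le D$ and $h^{\text{max}}\le\eta$ from $V$, and then feed the complex into Proposition~\ref{KERCOKER}(2). The paper also only sketches the infinite-product step (it asserts that exactness and $\Ind$ commute with products, and then that the invariants are unchanged ``from the definitions''), so your flagged concern there is legitimate but not a divergence from the paper.

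There is one slip in your final step. Proposition~\ref{KERCOKER}(2) bounds $\delta$ and $h^{\text{max}}$ of the \emph{kernel and cokernel} of a map $f:V\to W$ between modules already known to be presented in finite degrees; it says nothing directly about the codomain $W$. Applying it to the surjection $\ker d^p\twoheadrightarrow H^p(X;V)$ therefore yields no bound on $H^p(X;V)$ (and even invoking the proposition that way presupposes $H^p(X;V)$ is presented in finite degrees, which is part of what you are proving). The paper's remedy is immediate and is really what you want: write
\[
H^p(X;V)=\text{coker}\bigl(C^{p-1}(X;V)\longrightarrow \ker d^p\bigr)
\]
and apply Proposition~\ref{KERCOKER}(2) to \emph{this} map. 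Since $\delta(C^{p-1})\le D$, $h^{\text{max}}(C^{p-1})\le\eta$, and you already have $\delta(\ker d^p)\le D$ and $h^{\text{max}}(\ker d^p)\le\max(2D-2,\eta)$, the proposition now gives $\delta(H^p)\le\delta(\ker d^p)\le D$ and $h^{\text{max}}(H^p)\le\max(2D-2,\eta,\max(2D-2,\eta))=\max(2D-2,\eta)$, exactly the claimed bounds. The finite-generation clause is handled just as you describe.
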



\noindent\begin{proof}

Given that $G=\pi_1(X)$, the universal cover $\tilde{X}$ of $X$  has a $G$-equivariant cellular chain complex $C_*(\tilde{X})$.  For each $p\geq 0$, let $\mathcal{A}_p$ be the set of representatives of $G$-orbits of the $p$-cells. The group of $p$-chains $C_p(\tilde{X})$  is a free $G$-module with a generator for each element in $\mathcal{A}_p$.

To obtain the $\FI$-module $H^p(X;V)$  we consider the complex of $\FI$-modules
$$C^{p-1}(X,V)\xrightarrow[]{\delta_{p-1}} C^{p}(X,V) \xrightarrow[]{\delta_{p}} C^{p+1}(X,V)$$ 
where $C^p(X;V)$  is a direct product of $\FI\mathsf{[G]}$-modules $\prod_{\alpha\in\mathcal{A}_p} V$  given by 
$$C^p(X;V)_n:=\Hom_G(C_p(\tilde{X}),V_n)=\Hom_{\mathbb{Z}[G]}\big(\bigoplus_{\alpha\in\mathcal{A}_p}\mathbb{Z}[G],V_n\big)\cong\prod_{\alpha\in\mathcal{A}_p}\Hom_{\mathbb{Z}[G]}(\mathbb{Z}[G],V_n)\cong\prod_{\alpha\in\mathcal{A}_p} V_n.$$
In the category  $\Mod_{\FI\mathsf{[G]}}$ direct products preserve exactness (since in $\Mod_{\mathbb{Z}\mathsf{[G]}}$  direct products preserve exactness) and 
$\text{Ind}_{S_{n-1}}^{S_n} \prod_{\alpha\in\mathcal{A}_p} V\cong\prod_{\alpha\in\mathcal{A}_p}\text{Ind}_{S_{n-1}}^{S_n} V$  (see for example \cite[Prop. 4.4]{LAM}). It follows  from the definitions of stable degree and local degree that 
 $\delta(\prod_{\alpha\in\mathcal{A}_p} V)=\delta(V)$  and  $h^{\text{max}}(\prod_{\alpha\in\mathcal{A}_p} V)=h^{\text{max}}(V)$ (in the case $\mathcal{A}_p$ is finite, this is just an immediate consequence from Proposition \ref{KERCOKER} (1)). Therefore,  for every $p\geq 0$
 $$\delta\big(C^p(X;V)\big)\leq D\text{\hspace{0.5cm}  and \hspace{0.5cm}}h^{\text{max}}\big(C^p(X;V)\big)\leq \eta.$$
Since $$H^p(X;V)=\text{coker}\big(C^{p-1}(X,V)\rightarrow \ker \big( \delta_p: C^{p}(X,V)\rightarrow  C^{p+1}(X,V)\big)\big),$$
the $\FI$-module $H^p(X;V)$ is presented in finite degrees and it follows from Proposition \ref{KERCOKER} (2) that
$$\delta\big(H^p(X;V)\big)\leq \delta\big( \ker \big( \delta_p: C^{p}(X,V)\rightarrow  C^{p+1}(X,V)\big))\leq \delta\big(C^{p+1}(X,V)\big)=\delta(V)\leq D$$
and
$$h^{\text{max}}\big(H^p(X;V)\big)\leq \max\big(2\delta(C^{p-1}(X,V))-2, h^{\text{max}}(C^{p-1}(X,V)), h^{\text{max}}(\ker ( \delta_p))\big)$$
$$\text{ \hspace{0.5cm}}\leq \max\big(2D-2, \eta,\max\big(2D-2,\eta,\eta)\big)= \max (2D-2,\eta).$$

Finally, if $X$ has finitely many cells in each dimension and $V$ is is a finitely generated $\FI\mathsf{[G]}$-module, then  $H^p(X;V)$ it is a subquotient of finitely generated $\FI$-modules $C^p(X;V)$.
\end{proof}

\begin{remark} If $X$ has finitely many cells in each dimension and $V$ is  a finitely generated $\FI\mathsf{[G]}$-module over $\mathbb{Q}$ with weight $\leq m$ and stability degree $N$, then  $H^p(X;V)$ has weight $\leq m$ and stability degree $N$ (\cite[Prop 5.1]{JimenezRollandMCGasFI}).
\end{remark}

\subsection{$\FI\mathsf{[G]}$-modules and spectral sequences}
Let $A$ be an abelian group.
Consider a functor from {$\FI^{op}$} to the category of group extensions with quotient $G$. Let us denote by $E,H: \FI^{op}\rightarrow\mathcal{G}rps$ the corresponding $\FI^{op}$ groups, such that the group extension associated to { $[n]$} is given by
$$ 1\rightarrow H_n\rightarrow E_n\rightarrow G\rightarrow 1.$$
The action of $G$ on the cohomology groups $H^q(H_n;A)$, gives $H^q(H;A)$  the structure of an $\FI\mathsf{[G]}$-module, for all $q\geq 0$.
 
By considering the Lyndon--Hochschild--Serre spectral sequences associated to these extensions, we obtain a first quadrant spectral sequence of $\FI$-modules converging to the graded $\FI$-module $H^*(E;A)$, with $E_2$-term
$$E_2^{p,q}=H^p\big(G;H^q(H;A)\big).$$

More generally, let $X$ be a connected CW-complex, %
 $x\in X$ and  suppose that the fundamental group $\pi_1(X,x)$ is $G$.  Consider an $\FI^{op}$-fibration over $X$: a functor from { $\FI^{op}$} to the category $\Fib(X)$ of fibrations over $X$. Let $E: \FI^{op}\rightarrow\mathcal{T}op$ be the $\FI^{op}$-space   of total spaces and $H$ the $\FI^{op}$-space of  fibers over the basepoint $x$. 
 The functoriality of the Leray-Serre spectral sequence implies that we have a spectral sequence of $\FI$-modules   (see  \cite[Proposition 3.10]{MILLERKUPERS} for details) $E^{p,q}_*=E^{p,q}_*\big(E\rightarrow X\big)$ converging to the graded $\FI$-module $H^*(E;A)$.

  The action of the fundamental group $G$ on the cohomology of the fiber gives $H^q(H;A)$ the structure of an $\FI\mathsf{[G]}$-module, for all $q\geq 0$. Then, the $E_2$-page of this spectral sequence is given by the $\FI$-modules
 $$E_2^{p,q}=H^p\big(X;H^q(H;A)\big).$$ 






For the families of groups and spaces  that we are interested in this paper, this setting will give us cohomologically graded first quadrant spectral sequences of $\FI$-modules converging to their integral cohomology. 

\section{Integral cohomology of pure mapping class groups of surfaces}\label{MCG}

 In this section we study the $\FI$-module structure of the integral cohomology of  the pure mapping class groups of orientable and non-orientable surfaces. We recall how the Birman exact sequence and certain fibrations allows to relate the cohomology of pure mapping class groups with the cohomology of configuration spaces. The spectral sequence arguments from \cite{CMNR} together with  their the bounds on stable degree and local degree for cohomology of configuration spaces (see Lemma \ref{CONF} below) and our Proposition \ref{LEMMA} will allow us to obtain the corresponding bounds for pure mapping class groups in Theorem \ref{MAINMCG}.
 
 \subsection{Configuration spaces} 
For a topological space $M$, following the notation in \cite{CMNR}, we denote by  $\PConf(M)$  the $\FI^{op}$-space that sends the  set $S$ to the space of embeddings of $S$ into $M$. For a given inclusion $f:S\hookrightarrow T$ in $\Hom_{\FI}(S, T)$ the corresponding restriction $f_*: \PConf_S(M)\rightarrow \PConf_T(M)$ is given by precomposition. In particular, for the set $[n] = \{1, . . . , n\}$ we obtain the configuration space $\PConf_n(M)=$Emb$([n], M)$  of ordered $n$-tuples of distinct points on $M$. By taking  integral cohomology we obtain an $\FI$-module $H^k(\PConf(M);\mathbb{Z})$.  The following Lemma \ref{CONF} is one of the main applications in \cite{CMNR}  of  Proposition \ref{SPECTRAL} and is a key ingredient in the proof of Theorems \ref{MAINMCG} and \ref{DIFFEO}.

 \begin{lemma}[Theorem 4.3 \cite{CMNR} Linear ranges for configuration spaces]\label{CONF}Let $M$ be a connected manifold of dimension $d\geq 2$, and set

\begin{multicols}{2} 
$ \mu= 
    \begin{cases}
       2& \text{if  }d=2\\
       1& \text{if  }d\geq 3
   \end{cases}$
  
   $\lambda= 
    \begin{cases}
       0& \text{if  $M$  is non-orientable}\\
       1& \text{if $M$ is orientable}
   \end{cases}$
   \end{multicols}
\noindent Let $A$ be an abelian group. Then we have:
 \begin{itemize}
\item [(1)] $\delta\big(H^q(\PConf(M);A)\big)\leq \mu q,$
\item [(2)] $h^{\text{max}}\big(H^q(\PConf(M);A)\big)\leq \max(-1,4\mu q-2\mu\lambda -2),$ 
\item [(3)] $t_0\big(H^q(\PConf(M;A)\big)\leq \max(\mu q,5\mu q-2\mu\lambda -1),$ 
\item [(4)] $t_1\big(H^q(\PConf(M);A)\big)\leq \max(\mu q,9\mu q-4\mu\lambda -2).$ 
 \end{itemize}
 \end{lemma}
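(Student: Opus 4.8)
The plan is to deduce Lemma~\ref{CONF} from the spectral sequence machinery already assembled, namely Proposition~\ref{SPECTRAL}, by feeding it the classical Fadell--Neuwirth fibrations that relate configuration spaces of different cardinalities. The starting point is the observation that forgetting the last point gives an $\FI^{op}$-fibration $\PConf(M \setminus \{\text{pt}\}) \to \PConf(M) \to M$ (more precisely, the fibration $\PConf_{n}(M) \to M$ with fiber $\PConf_{n-1}(M \setminus \{\text{pt}\})$), which is an instance of the $\FI^{op}$-fibration setup from Section~\ref{FIMod}. Its Leray--Serre spectral sequence is a first-quadrant cohomologically graded spectral sequence of $\FI$-modules with $E_2^{p,q} = H^p(M; H^q(\PConf(M\setminus\{\text{pt}\}); A))$, converging to $H^{p+q}(\PConf(M); A)$. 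Since $M$ is a manifold, $H^p(M;-)$ vanishes for $p > d$, so only finitely many columns are nonzero; this bounds the $s$ that appears in Proposition~\ref{SPECTRAL}.

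The heart of the argument is an induction, presumably on $q$ (or on the pair $(d,q)$), establishing simultaneously the bounds on $\delta$ and $h^{\text{max}}$. For the base case $q=0$ one has $H^0(\PConf(M); A) = A$ as a constant $\FI$-module when $M$ is connected, so $\delta = 0$ and $h^{\text{max}} = -1$, matching the claimed formulas. For the inductive step, I would first need to handle the coefficients: $H^q(\PConf(M\setminus\{\text{pt}\}); A)$ is an $\FI[G]$-module with $G = \pi_1(M)$, and by Proposition~\ref{LEMMA} applied to $X = M$ (which has finitely many cells in each dimension, up to homotopy, if $M$ is a manifold of finite type — or one works cellwise), the $\FI$-modules $E_2^{p,q} = H^p(M; H^q(\PConf(M\setminus\{\text{pt}\});A))$ satisfy $\delta(E_2^{p,q}) \le \delta(H^q(\PConf(M\setminus\{\text{pt}\});A))$ and $h^{\text{max}}(E_2^{p,q}) \le \max(2\delta(\cdots)-2, h^{\text{max}}(\cdots))$. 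Here one uses that $M \setminus \{\text{pt}\}$ is again a connected $d$-manifold (non-orientable iff $M$ is, so $\lambda$ is unchanged; and $M\setminus\{\text{pt}\}$ is open hence has the homotopy type of a complex of dimension $<d$, which may be where the $\lambda$-shift in the exponent ultimately comes from — orientable closed manifolds lose their top cohomology class after removing a point). Plugging the inductive bounds for $H^q(\PConf(M\setminus\{\text{pt}\});A)$ into Proposition~\ref{SPECTRAL} then yields $\delta(H^k(\PConf(M);A)) \le D_k$ and $h^{\text{max}}(H^k(\PConf(M);A)) \le \max(\max_{\ell} \eta_\ell, \max_\ell(2D_\ell - 2))$, and one checks these collapse to $\mu k$ and $\max(-1, 4\mu k - 2\mu\lambda - 2)$ after bookkeeping. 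Finally, parts~(3) and~(4) follow formally from~(1) and~(2) via Proposition~\ref{BOUNDS}: $t_0 \le \delta + h^{\text{max}} + 1$ and $t_1 \le \delta + 2h^{\text{max}} + 2$, and substituting the bounds from (1) and (2) and simplifying gives the stated $\max$ expressions.

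The main obstacle, I expect, is the careful arithmetic of propagating the bounds through the recursion and getting the constants exactly right — in particular tracking how the $\lambda$-correction and the $\mu \in \{1,2\}$ dichotomy (reflecting that for $d=2$ the cohomology of configuration spaces is supported in a range twice as large, since $H^*(M)$ can be nonzero in degrees $0,1,2$) interact with the $\max(\cdot,\cdot)$ in Proposition~\ref{SPECTRAL}(2). One has to verify that the recursively-defined bound is a fixed point of the operation coming from the spectral sequence, i.e., that if $\delta(H^q) \le \mu q$ and $h^{\text{max}}(H^q) \le 4\mu q - 2\mu\lambda - 2$ for all $q < k$, then the same inequalities hold at $k$; the inequality $2D_\ell - 2 \le 4\mu k - 2\mu\lambda - 2$ needs $D_\ell \le \mu \ell$ together with $\ell \le k$, which is fine, but the edge cases (small $k$, the $\max(-1, \cdot)$ truncation, the distinction $d=2$ versus $d \ge 3$ where $H^p(M\setminus\text{pt})$ ranges over fewer degrees) require attention. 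A secondary subtlety is justifying the $\FI^{op}$-fibration structure and the finiteness hypothesis needed for Proposition~\ref{LEMMA} when $M$ is not compact; one works with a CW structure with finitely many cells in each dimension, which any manifold of finite type admits, or reduces to that case. Since this is precisely \cite[Theorem 4.3]{CMNR}, I would ultimately cite their computation for the delicate constant-chasing and present the above as the structure of the argument.
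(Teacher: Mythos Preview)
The paper does not give a proof of this lemma: it is stated as a direct quotation of \cite[Theorem~4.3]{CMNR}, with no argument supplied beyond the citation. Your ultimate conclusion---defer to \cite{CMNR}---is therefore exactly what the paper does.

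Your sketch, however, differs from the argument in \cite{CMNR} and has a gap as written. Their proof uses the Totaro (Cohen--Taylor) spectral sequence for the open embedding $\PConf_n(M)\hookrightarrow M^n$: the $E_2$-page is built from $H^*(M^n)$ tensored with pieces of the cohomology of $\PConf(\mathbb{R}^d)$, and these are $\FI\#$-modules, so $h^{\text{max}}(E_2^{p,q})=-1$ uniformly while $\delta(E_2^{p,q})$ is read off directly; Proposition~\ref{SPECTRAL} then yields (1) and (2) in one step, with no recursion. By contrast, your Fadell--Neuwirth outline does not close up as stated. To make the projection to $M$ an honest $\FI^{op}$-fibration one must project onto an \emph{extra} marked point, so the resulting spectral sequence converges to the shift $\mathcal{S}H^*(\PConf(M);A)$ rather than to $H^*(\PConf(M);A)$ itself, and the fiber is $\PConf(M\setminus\{\text{pt}\})$, configurations on a \emph{different} manifold; the proposed ``induction on $q$'' therefore requires the inductive hypothesis to hold for all connected $d$-manifolds of the given orientability simultaneously, and one must also undo the shift (costing a $+1$ on $h^{\text{max}}$). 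This can likely be made to work, but it is more delicate than you indicate and is not the route taken in \cite{CMNR}. Your derivation of (3) and (4) from (1) and (2) via Proposition~\ref{BOUNDS} is correct.
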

 
\subsection{Pure mapping class groups of surfaces}
 
 Let $M$ be a connected, smooth manifold, and $\mathfrak{p}\in \PConf_n(\mathring{M})$, in other words $\big(\mathfrak{p}(1),\mathfrak{p}(2),\ldots,\mathfrak{p}(n)\big)$  is an ordered  configuration of $n$ points  in the interior $\mathring{M}$ of $M$.   
 We denote by $\Diff(M)$ the group of diffeomorphisms of $M$. By abusing notation, we use $\Diff (M )$ instead of  $\Diff(M \text{ rel } \partial M)$  if $\partial M\neq\varnothing$ and if  $M$ is orientable, we can restrict to orientation-preserving diffeomorphisms.  
 
 Let $\Diff^{\mathfrak{p}} (M)=\Diff(M, P)$ be   the subgroup of $\Diff (M)$ of diffeomorphisms that leave invariant the set $P=\mathfrak{p}([n])$ and $\PDiff^{\mathfrak{p}} (M)=\PDiff(M,P)$ is the subgroup of $\Diff (M)$  that consists of the diffeomorphims that fix each  point in $P$.  Since $M$ is connected, if $\mathfrak{p},\mathfrak{q}\in \PConf_n(\mathring{M})$, then $\Diff^{\mathfrak{p}} (M)\approx\Diff^{\mathfrak{q}} (M)$ and $\PDiff^{\mathfrak{p}} (M)\approx \PDiff^{\mathfrak{q}} (M)$. We denote them by $\Diff^n (M)$ and $\PDiff^n (M)$, respectively.

  The mapping group with marked points is defined as $\Mod^n(M)=\pi_0\big(\Diff^{n} (M)\big)$ and the pure mapping class group is $\PMod^n(M)=\pi_0\big(\PDiff^{n} (M)\big)$.  When no marked points are considered, we just write $\Mod(M)$. Following the notation in \cite{JimenezRollandRepStability}, we consider the $\FI^{op}$-group $\PMod^{\bullet}(M)$  given by $[n]\mapsto \PMod^n(M)$. An inclusion $f:[m]\hookrightarrow[n]$ induces a restriction $\PDiff^{\mathfrak{p}} (M)\rightarrow \PDiff^{\mathfrak{p}\circ f} (M)$, which induces the homomorphism $f^*:\PMod^n(M)\rightarrow \PMod^m(M)$. By taking  integral cohomology we obtain an $\FI$-module $H^k(\PMod^{\bullet}(M);\mathbb{Z})$, for $k\geq 0$.
 
 In what follows we restrict ourselves to the case when $M=\Sigma$ is a surface. If $\Sigma$ is orientable we consider orientation-preserving diffeomorphisms.

\subsection{The Birman exact sequence}
Let $2g+r>2$, there exists a short exact sequence  

\begin{equation}\label{BIR1}
1\rightarrow \pi_1(\PConf_n(\Sigma_g^{r}))\rightarrow \PMod^n(\Sigma_{g,r})\rightarrow \Mod(\Sigma_{g,r})\rightarrow 1.
\end{equation}

This is the {\it Birman exact sequence} for the mapping class group of an orientable surface introduced by Birman  (\cite{BIRMANSS}  \cite{BIRMAN}). 

There also exists a Birman exact sequence for the mapping class group of a non-orientable surface (see for example  \cite[Theorem 2.1]{KOR} and  \cite[Theorem 5.3] {HamNONORIENT})
when  $g\geq3$ 
\begin{equation}\label{BIR2}
1\rightarrow \pi_1(\PConf_n(N_{g}^r))\rightarrow \PMod^n(N_{g,r})\rightarrow \Mod(N_{g,r})\rightarrow 1.
\end{equation}

Therefore, for $\Sigma$ a surface as before (orientable or non-orientable) we consider the functor from $\FI^{op}$ to the category of group extensions of $G=\Mod( \Sigma)$ given as follows. The group extension associated to $[n]$ is
 
 \begin{equation}\label{BIR}
1\rightarrow \pi_1(\PConf_n(\mathring{\Sigma}))\rightarrow \PMod^n(\Sigma)\rightarrow \Mod(\Sigma)\rightarrow 1,
\end{equation}
The action of $\Mod(\Sigma)$ on  $H^q\big(\pi_1(\PConf_n(\mathring{\Sigma});A\big)=H^q\big(\PConf_n(\mathring{\Sigma});A\big)$, gives $H^q\big(\PConf(\mathring{\Sigma});A\big)$  the structure of an  $\FI\mathsf{[G]}$-module, for any abelian group $A$.
To see that this association is indeed functorial we refer the reader to \cite[Section 5] {JimenezRollandRepStability}.

\subsection{Low genus surfaces}


For low genus surfaces, there is not a Birman exact sequence. Fortunately the following fibrations will allow us below, in Theorem \ref{MAINMCG}, to use the spectral sequence argument from Proposition \ref{SPECTRAL}  to obtain finite presentation for the corresponding $\FI$-modules.


\begin{prop}\label{FIBRATIONS} There exists  fibrations  
\begin{itemize}
\item [a)] {\bf For the sphere (\cite[Theorem A]{SMALE}, \cite[Lemma 2.1]{COHEN}):}

\begin{equation*}
\PConf_n(S^2)\rightarrow ESO(3)\times_{SO(3)}\PConf_n(S^2)\rightarrow BSO(3) \end{equation*}

where $ESO(3)\times_{SO(3)}\PConf_n(S^2)$ is  a $K(\PMod^n(S^2),1)$ for all $n\geq 3$

\item[(b)]{\bf For the punctured torus (\cite[Section 2]{TORUS}):}  

\begin{equation*}
\PConf_n(\Sigma_1^1)\rightarrow ESL(2,\mathbb{Z})\times_{SL(2,\mathbb{Z})}\PConf_n(\Sigma_1^1)\rightarrow BSL(2,\mathbb{Z}) \end{equation*}

where 
$ESL(2,\mathbb{Z})\times_{SL(2,\mathbb{Z})}\PConf_n(\Sigma_1^1)$ is  a $K(\PMod^{n}(\Sigma_1^1),1)$ for all $n\geq 1$.

\item[(c)]{\bf For the  projective plane(\cite{EE},\cite[Corollary 2.6]{WANG}):}

\begin{equation*}
\PConf_n(\mathbb{R}P^2)\rightarrow ESO(3)\times_{SO(3)}\PConf_n(\mathbb{R}P^2)\rightarrow BSO(3)
\end{equation*}

where
$ESO(3)\times_{SO(3)}\PConf_n(\mathbb{R}P^2)$ is a $K(\PMod^n(\mathbb{R}P^2),1)$ for $n\geq 2$.

\item[(d)] {\bf For the Klein bottle( \cite[Theorems 6.1 \& 7.1]{KLEINBUNDLES}):} 
\begin{equation*}
\PConf_n(\mathbb{K})\rightarrow E\left(\mathbb{Z}_2\times O(2)\right)\times_{\mathbb{Z}_2\times O(2)}\PConf_n(\mathbb{K})\rightarrow B\left(\mathbb{Z}_2\times O(2)\right)
\end{equation*}

where $E\left(\mathbb{Z}_2\times O(2)\right)\times_{\mathbb{Z}_2\times O(2)}\PConf_n(\mathbb{K})$ is  a $K(\PMod^n(\mathbb{K}),1)$ for all $n\geq 1$

\end{itemize}
\end{prop}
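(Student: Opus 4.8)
The plan is to establish each of the four fibrations by assembling classical homotopy-theoretic input about diffeomorphism groups of low-genus surfaces with the long exact sequence of a fibration, then identifying the total space of the associated Borel construction as a $K(\PMod^n(\Sigma),1)$. The common mechanism is this: for a surface $\Sigma$ whose diffeomorphism group $\Diff(\Sigma)$ deformation retracts onto a compact Lie group (or a discrete group) $\Gamma$ — namely $SO(3)$ for $S^2$ and $\mathbb{R}P^2$ by Smale and by Earle--Eells, $SL(2,\mathbb{Z})$ for $\Sigma_1^1$ by the work cited in \cite{TORUS}, and $\mathbb{Z}_2\times O(2)$ for $\mathbb{K}$ by \cite{KLEINBUNDLES} — the evaluation map $\Diff(\Sigma)\to \PConf_n(\mathring\Sigma)$, $\phi\mapsto \phi\circ\mathfrak{p}$, is a fibration with fiber $\PDiff^n(\Sigma)$. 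Passing to the Borel construction over $E\Gamma$ turns this into the fibration $\PConf_n(\mathring\Sigma)\to E\Gamma\times_\Gamma \PConf_n(\mathring\Sigma)\to B\Gamma$ displayed in the statement (after replacing $\Diff$ by the homotopy-equivalent $\Gamma$), so part of the work is purely formal once the retraction results are invoked.

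The substantive point in each case is showing the total space is aspherical with the correct fundamental group. First I would record that $\PConf_n(\mathring\Sigma)$ is aspherical: for $\Sigma = S^2$ and $\mathbb{R}P^2$ this holds precisely in the stated ranges ($n\geq 3$, resp. $n\geq 2$) since $\PConf_n$ of a closed surface other than these small cases has a contractible universal cover via the Fadell--Neuwirth fibrations, and the exceptional low-$n$ behavior of $S^2$ and $\mathbb{R}P^2$ forces the hypotheses on $n$; for $\Sigma_1^1$ and $\mathbb{K}$, which have nonempty boundary or are $K(\pi,1)$'s themselves, $\PConf_n(\mathring\Sigma)$ is a $K(\pi,1)$ for all $n\geq 1$. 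Then, since $\Gamma$ is either $SO(3)$ (whose universal cover $S^3$ is $2$-connected, so $\pi_i(SO(3))=0$ for $i\neq 1,3$ — but here one uses instead that $SO(3)$ acts, and the relevant fact is asphericity of the bundle), or discrete ($SL(2,\mathbb{Z})$), or homotopy-equivalent to a product of circles and $\mathbb{Z}_2$'s, one checks the homotopy long exact sequence of $\PConf_n(\mathring\Sigma)\to E\Gamma\times_\Gamma\PConf_n(\mathring\Sigma)\to B\Gamma$ collapses above degree $1$, using that $B\Gamma$ is aspherical in the discrete cases and that the $\pi_{\geq 2}$ contributions from $BSO(3)$ (which are $\pi_{\geq 3}$) are killed against the fiber; the cleanest route is to note $E\Gamma\times_\Gamma\PConf_n(\mathring\Sigma)$ is homotopy equivalent to $E\Diff(\Sigma)\times_{\Diff(\Sigma)}\PConf_n(\mathring\Sigma)\simeq B\PDiff^n(\Sigma) = B\PMod^n(\Sigma)$, the last equality because $\PDiff^n(\Sigma)$ has contractible components in these low-genus cases, so $B\PDiff^n(\Sigma)$ is a $K(\PMod^n(\Sigma),1)$.

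Concretely I would organize the proof as four short paragraphs, one per bullet, each citing the relevant source for the diffeomorphism-group computation (\cite{SMALE,COHEN} for (a), \cite{TORUS} for (b), \cite{EE,WANG} for (c), \cite{KLEINBUNDLES} for (d)) and then appealing to the Borel-construction argument above; much of the bundle-theoretic verification is already carried out in those references, so the job is largely one of citation and of making the identification of the total space with $B\PMod^n(\Sigma)$ explicit. The main obstacle I anticipate is the bookkeeping around the exact ranges of $n$: one must verify that $B\PDiff^n(\Sigma)$ is genuinely a $K(\PMod^n(\Sigma),1)$ — equivalently that $\PDiff^n(\Sigma)$ has contractible identity component — only in the stated ranges, and that asphericity of $E\Gamma\times_\Gamma\PConf_n(\mathring\Sigma)$ can fail outside them (e.g. $\PConf_1(S^2)=S^2$ and $\PConf_2(S^2)\simeq S^2$ are not aspherical, which is exactly why (a) requires $n\geq 3$). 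So the care is in matching the asphericity thresholds of $\PConf_n(\mathring\Sigma)$ with the contractibility thresholds for components of $\PDiff^n(\Sigma)$, both of which are supplied by the cited literature.
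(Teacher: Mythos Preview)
Your proposal is correct and follows essentially the same approach as the paper: invoke the classical results identifying $\Diff(\Sigma)$ up to homotopy with the stated group $\Gamma$, form the Borel construction to get the displayed fibration, and identify the total space with $B\PDiff^n(\Sigma)$ via $E\Diff(\Sigma)\times_{\Diff(\Sigma)}\PConf_n(\mathring\Sigma)\simeq E\Diff(\Sigma)/\PDiff^n(\Sigma)\simeq B\PDiff^n(\Sigma)$, whose fundamental group is $\PMod^n(\Sigma)$. The paper then simply cites the asphericity of the Borel construction from the listed references (Cohen, Wang, \cite{KLEINBUNDLES}), whereas you sketch an independent verification via contractibility of the components of $\PDiff^n(\Sigma)$; these are equivalent and your ``cleanest route'' is exactly the paper's argument. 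Your brief detour through the long exact sequence and $\pi_{\geq 2}(BSO(3))$ is unnecessary (and would not work as stated, since $BSO(3)$ is not aspherical), but you correctly abandon it in favor of the $B\PDiff^n$ identification.
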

\begin{proof}
\begin{itemize}
\item[(a)] It is a classical theorem of Smale (\cite[Theorem A]{SMALE}) that the inclusion $SO(3)\hookrightarrow \Diff^+(S^2)$ is a homotopy equivalence.  On the other hand, it is known that $ESO(3)\times_{SO(3)}\PConf_n(S^2)$ is  a $K(\pi,1)$ for $n\geq 3$ (see for instance the proof  of \cite[Lemma 2.1]{COHEN}).
\item[(b)]  Since both   $\PConf_n(\Sigma_1^1)$ and  $ BSL(2,\mathbb{Z})$ are $K(\pi,1)$'s,  it follows that  $ESL(2,\mathbb{Z})\times_{SL(2,\mathbb{Z})}\PConf_n(\Sigma_1^1)$  is a $K(\pi,1)$.  Moreover 
$$\pi_1\big(ESL(2,\mathbb{Z})\times_{SL(2,\mathbb{Z})}\PConf_n(\Sigma_1^1)\big)\cong \PMod^{n}(\Sigma_1^1)$$
See \cite[Section 2]{TORUS} for further discussion of how to use this types of fibrations to compute rational cohomology of the pointed mapping class group of the torus.

\item[(c)] It was proved in \cite{EE} that the inclusion $SO(3)\hookrightarrow \Diff(\mathbb{R}P^2)$  is a homotopy equivalence. From \cite[Cor 2.6]{WANG}, 
$ESO(3)\times_{SO(3)}\PConf_n(\mathbb{R}P^2)$ is a $K(\pi, 1)$   for $n\geq 2$.
\item[(d)] \cite[Theorem 6.1]{KLEINBUNDLES}) shows that the inclusion of $\mathbb{Z}_2\times O(2)\hookrightarrow \Diff(\mathbb{K})$ is a homotopy equivalence and in the proof of \cite[Theorem 7.1]{KLEINBUNDLES} the authors show that the Borel construction $E\left(\mathbb{Z}_2\times O(2)\right)\times_{\mathbb{Z}_2\times O(2)}\PConf_n(\mathbb{K})$ is a $K(\pi,1)$, for $n\geq 1$.
\end{itemize}
To obtain the corresponding fundamental groups in each case recall that, for a closed  manifold $M$, the Borel construction $$E\Diff(M)\times_{\Diff(M)}\PConf_n(M)\approx E\Diff(M)\times_{\Diff(M)}\frac{\Diff(M)}{\PDiff^n(M)}=\frac{E\Diff(M)}{\PDiff^n(M)}\approx B\PDiff^n(M)$$
which has fundamental group
$$\pi_1\big( E\Diff(M)\times_{\Diff(M)}\PConf_n(M)\big)=\pi_1\big( B\PDiff^n(M)\big)=\PMod^n(M),$$
where we consider   $\Diff^+(M)$ instead of $\Diff(M)$  if $M$ is orientable.
\end{proof}

Therefore, from Proposition \ref{FIBRATIONS} , we have  fibrations   over some space $X^{\Sigma}$
\begin{equation}\label{FIBLOW}
\PConf_n(\Sigma)\rightarrow E_n^{\Sigma}\rightarrow X^{\Sigma}
\end{equation}
with fiber $\PConf_n(\Sigma)$, for $\Sigma=S^2,\Sigma_1^1,\mathbb{R}P^2,\mathbb{K}$.
In each cases, the space $X^{\Sigma}$  has the homotopy type of a connected CW-complex with finitely many cells  in each dimension, and we obtain an $\FI^{op}$-fibration over $X^{\Sigma}$ that can be used to obtain information about  the integral cohomology of the corresponding pure mapping class group $\PMod^n(\Sigma)$. The action of $G=\pi_1(X^{\Sigma})$ on the fiber gives $H^q\big(\PConf(\Sigma);A\big)$ the structure of an  $\FI\mathsf{[G]}$-module, for any abelian group $A$.

\subsection{Linear bounds for the presentation degree of $H^k(\PMod^{\bullet}(\Sigma))$}


We  now have all the ingredients to obtain linear bounds on the presentation degree of the $\FI$-modules $H^k(\PMod^{\bullet}(\Sigma);A)$, for any abelian group $A$.

\begin{theo}[Linear bounds for integral cohomology]\label{MAINMCG} Let  $\Sigma$ be a surface such that:
\begin{itemize}
 \item[] $\Sigma=S^2, \Sigma_1^1$  or  $\Sigma=\Sigma_{g,r}$ with $2g+r>2$, if  the surface is orientable,  or 
 
 \item[] $\Sigma=\mathbb{R}P^2, \mathbb{K}$ or $\Sigma=N_{g,r}$  with $g\geq 3$  and $r\geq 0$ in the non-orientable case. 
 \end{itemize} 
Let $A$ be any abelian group. Then for any $k\geq 0$ the cohomology $H^k\big( \PMod^\bullet(\Sigma);A\big)$ is a finitely presented $\FI$-module and
\begin{itemize}
\item[(1)] $\delta\big(H^k\big( \PMod^\bullet(\Sigma);A\big)\big)\leq 2k$,
\item[(2)]  $h^{\text{max}}\big(H^k\big( \PMod^\bullet(\Sigma);A\big)\big)\leq \max(-1,16k-4\lambda-2)$,
\item[(3)]  $t_0\big(H^k\big( \PMod^\bullet(\Sigma);A\big)\big)\leq \max(0,18k-4\lambda-1)$,
\item[(4)] $t_1\big(H^k\big( \PMod^\bullet(\Sigma);A\big)\big)\leq \max(0,34k-8\lambda-2)$,
\end{itemize}
where $\lambda=1$ if $\Sigma$ is orientable and $\lambda=0$ if $\Sigma$ is non-orientable.
\end{theo}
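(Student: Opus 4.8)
The plan is to run the spectral sequence argument of Proposition \ref{SPECTRAL} on the two families of spectral sequences assembled in Section \ref{MCG}, feeding in the configuration-space bounds from Lemma \ref{CONF} (with $d=2$, so $\mu=2$) through Proposition \ref{LEMMA}. First I would separate the two cases. For a surface $\Sigma$ with $2g+r>2$ (orientable) or $g\geq 3$ (non-orientable), the Birman exact sequence (\ref{BIR}) gives an $\FI^{op}$-group extension with quotient $G=\Mod(\Sigma)$ and kernel $\pi_1(\PConf_n(\mathring\Sigma))$, hence a first-quadrant spectral sequence of $\FI$-modules $E_2^{p,q}=H^p\big(G;H^q(\PConf(\mathring\Sigma);A)\big)$ converging to $H^{p+q}(\PMod^\bullet(\Sigma);A)$. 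For the low-genus surfaces $S^2,\Sigma_1^1,\mathbb{R}P^2,\mathbb{K}$, Proposition \ref{FIBRATIONS} supplies $\FI^{op}$-fibrations (\ref{FIBLOW}) over a connected CW-complex $X^\Sigma$ with finitely many cells in each dimension and with total space a $K(\PMod^n(\Sigma),1)$ in the relevant range, giving a spectral sequence with $E_2^{p,q}=H^p\big(X^\Sigma;H^q(\PConf(\Sigma);A)\big)$ converging to the same target. In both cases the coefficient system $H^q(\PConf(\Sigma);A)$ is a genuine $\FI[G]$-module.

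Next I would bound the $E_2$-page. By Lemma \ref{CONF}(1)--(2) with $\mu=2$, the $\FI$-module $V=H^q(\PConf(\mathring\Sigma);A)$ satisfies $\delta(V)\leq 2q$ and $h^{\text{max}}(V)\leq\max(-1,8q-4\lambda-2)$, and it is presented in finite degrees by Lemma \ref{CONF}(3)--(4). Applying Proposition \ref{LEMMA} with $D=2q$ and $\eta=\max(-1,8q-4\lambda-2)$ (noting $2D-2=4q-2\leq 8q-4\lambda-2$ for $q\geq 0$, $\lambda\in\{0,1\}$, once one checks the tiny cases) yields $\delta(E_2^{p,q})\leq 2q$ and $h^{\text{max}}(E_2^{p,q})\leq\max(-1,8q-4\lambda-2)$ for every $p$. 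Here I need that $G=\Mod(\Sigma)$ (resp.\ $X^\Sigma$) has a $K(\pi,1)$ — or classifying space — with finitely many cells in each dimension so that Proposition \ref{LEMMA} applies and delivers finite generation; for mapping class groups this is classical (virtual duality / finite-type classifying space), and for the low-genus cases $X^\Sigma$ is explicitly of finite type by Proposition \ref{FIBRATIONS}. So on page $d=2$ we have $D_k=\max_{p+q=k}\delta(E_2^{p,q})=2k$ and $\eta_k=\max_{p+q=k}h^{\text{max}}(E_2^{p,q})=\max(-1,8k-4\lambda-2)$.

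Then I would invoke Proposition \ref{SPECTRAL} with $d=2$. Part (1) gives $\delta\big(H^k(\PMod^\bullet(\Sigma);A)\big)\leq D_k=2k$, which is (1). For part (2), with $s=\max(k+2,2)=k+2$, we get
$$h^{\text{max}}\big(H^k\big)\leq\max\Big(\max_{\ell\leq 2k}\eta_\ell,\ \max_{\ell\leq 2k-1}(2D_\ell-2)\Big)=\max\big(\max(-1,16k-4\lambda-2),\ 8k-6\big),$$
and since $16k-4\lambda-2\geq 8k-6$ for all $k\geq 0$, $\lambda\in\{0,1\}$, this simplifies to $\max(-1,16k-4\lambda-2)$, giving (2). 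Finally (3) and (4) follow by plugging (1) and (2) into Proposition \ref{BOUNDS}: $t_0\leq\delta+h^{\text{max}}+1\leq 2k+\max(-1,16k-4\lambda-2)+1=\max(2k,18k-4\lambda-1)$, which is $\max(0,18k-4\lambda-1)$ once one notes $18k-4\lambda-1\geq 2k$ for $k\geq 1$ and the $k=0$ case is handled separately; and $t_1\leq\delta+2h^{\text{max}}+2\leq 2k+2\max(-1,16k-4\lambda-2)+2=\max(2k,34k-8\lambda-2)=\max(0,34k-8\lambda-2)$. Finite presentation then follows since $H^k(\PMod^\bullet(\Sigma);A)$ is presented in finite degrees over the Noetherian ground, and by Proposition \ref{LEMMA} is finitely generated when $A$ is finitely generated — for general $A$ one still gets "presented in finite degrees", which together with the explicit $t_0,t_1$ bounds is exactly finite presentation.

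The main obstacle I anticipate is bookkeeping rather than conceptual: carefully verifying the max-expressions and the small-degree ($k=0$, and $q=0$ where $\PConf_0$ is a point) edge cases so that the clean linear formulas hold uniformly, and making sure Proposition \ref{LEMMA} is legitimately applicable — i.e.\ that in the Birman-sequence case $X$ can be taken to be a model of $B\Mod(\Sigma)$ with finitely many cells in each dimension (so that the hypotheses of Proposition \ref{LEMMA} are met and the coefficient system really is an $\FI[G]$-module presented in finite degrees), which is where one must quote finiteness properties of mapping class groups. A secondary subtlety is that Proposition \ref{SPECTRAL} requires the $E_d^{p,q}$ to be presented in finite degrees; this is guaranteed by the "furthermore" clause of Proposition \ref{LEMMA} together with Lemma \ref{CONF}(3)--(4), but it should be stated explicitly.
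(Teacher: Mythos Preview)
Your proposal is correct and follows essentially the same route as the paper: split into the Birman-sequence case and the low-genus fibration case, feed Lemma \ref{CONF} (with $\mu=2$) through Proposition \ref{LEMMA} to bound $\delta$ and $h^{\text{max}}$ on the $E_2$-page, then apply Proposition \ref{SPECTRAL} with $d=2$ and finish with Proposition \ref{BOUNDS}. Your bookkeeping on the $\max$-expressions is in fact more explicit than the paper's, and the finiteness input you anticipate needing --- that $\Mod(\Sigma)$ is of type $FP_\infty$ so that $B\Mod(\Sigma)$ has a finite-type model --- is exactly what the paper invokes.
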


\begin{proof}
For $\Sigma$ an orientable surface with $2g+r>2$ or a non-orientable surface of genus $g\geq 3$ we consider the Birman exact sequence (\ref{BIR}). Associated to these group extensions  we have cohomologically graded first quadrant Hochschild-Serre spectral sequences  of $\FI$-modules  with $E_2-$term
 $$E_2^{p,q}= H^{p}\big(\Mod(\Sigma);H^q(\PConf(\mathring{\Sigma});A)\big),$$
 converging to $H^{p+q}(\PMod^\bullet\big(\Sigma);A\big).$
 
 For $\Sigma=S^2,\Sigma_1^1,\mathbb{R}P^2,\mathbb{K}$,  we can use the fibrations (\ref{FIBLOW}) from Proposition \ref{FIBRATIONS}  to obtain graded first quadrant Serre spectral sequences  of $\FI$-modules  with $E_2-$term

$$E_2^{p,q}= H^{p}\big(X^{\Sigma};H^q(\PConf(\Sigma);A)\big),$$
 converging to $H^{p+q}(\PMod^\bullet\big(\Sigma);A\big).$
 
 In all cases, from  the linear bounds from Lemma \ref{CONF} and Proposition \ref{LEMMA}, we obtain
 $\delta(E_2^{p,q})\leq 2q$  and $h^{\text{max}}(E_2^{p,q})\leq \max(-1,8q-4\lambda-2)$, where $\lambda=1$ if $\Sigma$ is orientable and $\lambda=0$ if $\Sigma$ is non-orientable.
Then $$D_k=\max_{p+q=k} \delta(E_2^{p,q})\leq 2k\text{;  }\eta_k=\max_{p+q=k} h^{\text{max}}(E_d^{p,q})\leq \max(-1,8k-4\lambda-2).$$
Therefore, from Proposition \ref{SPECTRAL} we obtain

$$\delta\big(H^k\big( \PMod^\bullet(\Sigma);A\big)\big)\leq 2k\text{; }$$
$$h^{\text{max}}\big(H^k\big( \PMod^\bullet(\Sigma);A\big)\big)\leq  \max \big(\max_{\ell\leq 2k} \eta_\ell, \max_{\ell\leq 2k-1} (4\ell-2)\big)$$ $$\leq  \max \big(\max(-1,8(2k)-4\lambda-2), 4(2k-1)-2\big)\leq  \max(-1,16k-4\lambda-2),$$
which are the bounds in (1) and (2). From Proposition \ref{BOUNDS} we obtain 

$$t_0\big(H^k\big( \PMod^\bullet(\Sigma);A\big)\big)\leq 2k+\max(-1,16k-4\lambda-2)+1=\max(0,18k-4\lambda-1)$$
and
$$t_1\big(H^k\big( \PMod^\bullet(\Sigma);A\big)\big)\leq 2k+2\big(\max(-1,16k-4\lambda-2)\big)+2=\max(0,34k-8\lambda-2).$$

Furthermore, since the groups  $\Mod(\Sigma)$ are of type $FP_\infty$  (see for example \cite[Section 6]{IVANOV}) and the spaces $X^{\Sigma}$ have the homotopy type of a CW-complex with finitely many cells in each dimension, it follows from Proposition \ref{LEMMA} that $H^k\big(\PMod^\bullet(\Sigma);A\big)$ are finitely generated $\FI$-modules (hence finitely presented).
\end{proof}\medskip

\noindent{\bf Surfaces with boundary.}  
If $\Sigma$ has nonempty boundary, then $H^k(\PMod^{\bullet}(\Sigma);A)$ has an $\FI\#$-module structure for $k\geq 0$ (see for example \cite[Prop 6.3]{JimenezRollandMCGasFI}). Therefore, by Proposition \ref{FISHARP}, we obtain better  bounds for the presentation degree. 

\begin{theo}[Surfaces with boundary]\label{BDRY}
Let  $\Sigma=\Sigma_{g,r}$ with $g\geq 1$ and $r\geq 1$ or $\Sigma=N_{g,r}$ with $g\geq 3$ and $r\geq 1$.
Then for any abelian group $A$ and $k\geq 0$ the cohomology $H^k\big( \PMod^\bullet(\Sigma);A\big)$ is a finitely presented $\FI\#$-module with  local  degree $=-1$ and with stable degree, generation degree and presentation degree bounded above by $2k$.
\end{theo}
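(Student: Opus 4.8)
The plan is to combine the $\FI\#$-module structure on $H^k(\PMod^\bullet(\Sigma);A)$ with the bounds already established in Theorem \ref{MAINMCG} and the special properties of semi-induced (equivalently $\FI\#$) modules from Proposition \ref{FISHARP}. First I would recall that when $\partial\Sigma\neq\varnothing$ the cohomology $H^k(\PMod^\bullet(\Sigma);A)$ carries an $\FI\#$-module structure: one way to see this is via the forgetful maps that delete a marked point together with the ``push'' maps that introduce a new marked point near the boundary, which endow the sequence with the compatible $\FI$- and $\FI^{op}$-structure characterizing $\FI\#$-modules (cf. \cite[Prop 6.3]{JimenezRollandMCGasFI}); this reference handles the orientable case, and the same construction goes through verbatim for non-orientable $\Sigma$ with boundary since the Birman exact sequence (\ref{BIR2}) and the boundary-parallel push maps behave identically.

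Next, by Theorem \ref{UNIFORM} (or directly from the finite generation established in Theorem \ref{MAINMCG}) the $\FI$-module $H^k(\PMod^\bullet(\Sigma);A)$ is presented in finite degrees, and being the restriction to $\FI$ of an $\FI\#$-module, Church--Ellenberg--Farb's theorem \cite[Theorem 4.1.5]{CEF} shows it is an induced $\FI$-module, in particular semi-induced. Then Proposition \ref{FISHARP} applies directly: $h^{\text{max}}\big(H^k(\PMod^\bullet(\Sigma);A)\big)=-1$, and consequently $t_0 = \delta$ and $t_1\leq \delta$. It remains only to bound $\delta$, and here I would simply invoke Theorem \ref{MAINMCG}(1), which gives $\delta\big(H^k(\PMod^\bullet(\Sigma);A)\big)\leq 2k$ for all the surfaces in question (the hypotheses of Theorem \ref{BDRY}, namely $\Sigma_{g,r}$ with $g\geq 1,r\geq 1$ satisfying $2g+r>2$, and $N_{g,r}$ with $g\geq 3,r\geq 1$, are a subset of those in Theorem \ref{MAINMCG}). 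Stringing these together yields local degree $-1$ and stable, generation, and presentation degree all bounded above by $2k$, which is the claim.

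The only genuine content beyond bookkeeping is the $\FI\#$-module structure in the non-orientable case, so that is the step I expect to be the main obstacle — or rather, the step requiring the most care. The point is to verify that deleting marked points (the $\FI^{op}$-maps coming from restriction of diffeomorphisms) and adding marked points pushed in from a boundary component (giving the extra $\FI\#$-morphisms collapsing to the basepoint) assemble into an honest functor on the category $\FI\#$, i.e. that the relations defining $\FI\#$ are satisfied at the level of mapping classes. This is a diagram-chase using the compatibility of the Birman sequences under adding and forgetting punctures, together with the fact that a boundary component provides a canonical way to introduce a new puncture without interfering with the others; none of this uses orientability, so the argument in \cite[Prop 6.3]{JimenezRollandMCGasFI} transfers. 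Once the $\FI\#$-structure is in hand, the rest is an immediate application of Propositions \ref{FISHARP} and the already-proved Theorem \ref{MAINMCG}.
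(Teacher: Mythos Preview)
Your proposal is correct and follows essentially the same approach as the paper: invoke the $\FI\#$-module structure (citing \cite[Prop 6.3]{JimenezRollandMCGasFI}) and then apply Proposition \ref{FISHARP} together with the stable degree bound $\delta\leq 2k$ from Theorem \ref{MAINMCG}(1). You spell out slightly more detail than the paper does --- in particular making explicit that the $2k$ bound is inherited from Theorem \ref{MAINMCG}(1) and that the $\FI\#$ construction carries over to the non-orientable boundary case --- but the logic is the same.
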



\subsection{Cohomology of hyperelliptic mapping class groups}\label{HYPERSEC}

For a surface $\Sigma_g$ the {\it hyperelliptic diffeomorphism} is the order two map with $2g + 2$ fixed points and which acts as a rotation of $\pi$ around a central axis of $\Sigma_g$. We denote the mapping class of this diffeomorphism in $\Mod(\Sigma_g)$ by $\iota$.

\begin{definition} The {\it hyperelliptic mapping class group} $\Delta_g$  is the normalizer of  $\langle\iota\rangle\cong \mathbb{Z}_2$ in $\Mod(\Sigma_g)$. The group $\Delta_g^n$, the  {\it hyperelliptic mapping class group with marked points},  is defined as the preimage of $\Delta_g$ under the forgetful map $\PMod^n(\Sigma_g)\rightarrow\Mod(\Sigma_g)$. 
\end{definition}

For $g=1,2$, we have that $\Delta_g=\Mod(\Sigma_g)$, but for $g \geq 3$, $\Delta_g$  is neither normal nor of finite index in $\Mod(\Sigma_g)$. Moreover, there exists a non split central extension (see \cite{BIRHILDEN})
\begin{equation}\label{HYPER}
1\rightarrow \mathbb{Z}/2\mathbb{Z}\rightarrow\Delta_g\rightarrow \Mod^{2g+2}(\Sigma_{0})\rightarrow 1.\end{equation}

  The cohomology  ring of $\Delta_g$ has  been studied before, see for example \cite{BCP} and \cite{COHEN}.  In particular, the map $\Delta_g\rightarrow  \Mod^{2g+2}(\Sigma_{0})$ gives a cohomology isomorphism with coefficients in a ring $R$ containing $1/2$.

{


For $g\geq 3$, by restricting  the short exact sequence (\ref{BIR1}), we obtain a Birman exact sequence for hyperelliptic mapping class groups : 

\begin{equation}\label{BIRHYPER}
1\rightarrow \pi_1(\PConf_n(\Sigma_g))\rightarrow \Delta^n_{g}\rightarrow \Delta_{g}\rightarrow 1.
\end{equation}

We can consider the $\FI^{op}$-group $\Delta^{\bullet}_{g}$ given by $[n]\mapsto \Delta_g^n$ and for each inclusion $f:[m]\hookrightarrow[n]$, we restrict the homomorphism $f^*:\PMod^n(\Sigma_g)\rightarrow\PMod^m(\Sigma_g)$ to  $f^*:\Delta_g^n\rightarrow\Delta^m_g$. Therefore, for any abelian group $A$ and $k\geq 0$, we obtain  $\FI$-modules $H^k\big( \Delta^{\bullet}_{g};A\big)$.  From (\ref{HYPER}) it follows that  $\Delta_g$ is of type $FP_{\infty}$.
Using the sequence (\ref{BIRHYPER}) and Lemma \ref{CONF} in the setting of the  proof of Theorem \ref{MAINMCG}, it follows that this $\FI$-modules  are finitely presented. Moreover, with $\mathbb{Q}$- coefficients, the proof of \cite[Theorem 6.1]{JimenezRollandMCGasFI}  will give the same upper bounds for the  weight and stability degree.

\begin{theo}\label{HYPERMCG} 
Let $g\geq 3$ and $k\geq 0$ and $A$ be any abelian group, then the cohomology $H^k\big( \Delta^{\bullet}_{g};A\big)$ is a finitely presented $\FI$-module with upper bounds for the stable, local, generation and presentation degrees as in Theorem \ref{MAINMCG} (with $\lambda=1$).
Furthermore, over $\mathbb{Q}$ the  $\FI$-module $H^k\big( \Delta^{\bullet}_{g};\mathbb{Q}\big)$ has  weight$\leq 2k  $ and stability degree $\leq 4k $ and the sequence $\{H^k\big( \Delta^{n}_{g};\mathbb{Q}\big)\}_n$ is representation stable for $n\geq 6k$.
\end{theo}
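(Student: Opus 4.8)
The plan is to run exactly the same spectral sequence argument that proved Theorem~\ref{MAINMCG}, but now fed by the central extension~(\ref{BIRHYPER}) instead of the Birman sequence~(\ref{BIR}), and then to extract the rational consequences from the weight/stability-degree bookkeeping of~\cite{CEF}. First I would note that for $g\geq 3$ the group $\Delta_g$ is of type $FP_\infty$: this follows from the extension~(\ref{HYPER}), since $\mathbb{Z}/2$ is $FP_\infty$ and $\Mod^{2g+2}(\Sigma_0)$ is $FP_\infty$ (it is the mapping class group of a sphere with marked points, which is virtually torsion-free of finite cohomological dimension, or one can cite \cite{IVANOV}); extensions of $FP_\infty$ groups by $FP_\infty$ groups are $FP_\infty$. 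Hence there is a $K(\Delta_g,1)$ with finitely many cells in each dimension, which is what Proposition~\ref{LEMMA} requires.

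Next I would set up, for each $g\geq 3$, the Lyndon--Hochschild--Serre spectral sequence of $\FI$-modules attached to the $\FI^{op}$-family of extensions~(\ref{BIRHYPER}), with
$$E_2^{p,q}=H^p\big(\Delta_g;H^q(\PConf(\Sigma_g);A)\big)$$
converging to $H^{p+q}(\Delta_g^\bullet;A)$. Here $\Sigma_g$ is a closed orientable surface of dimension $2$, so Lemma~\ref{CONF} applies with $\mu=2$ and $\lambda=1$, giving $\delta\big(H^q(\PConf(\Sigma_g);A)\big)\leq 2q$ and $h^{\mathrm{max}}\big(H^q(\PConf(\Sigma_g);A)\big)\leq\max(-1,8q-6)$. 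By Proposition~\ref{LEMMA} (applicable because $\Delta_g$ admits a $K(\pi,1)$ with finitely many cells per dimension) we get $\delta(E_2^{p,q})\leq 2q$ and $h^{\mathrm{max}}(E_2^{p,q})\leq\max(-1,8q-4\lambda-2)$ with $\lambda=1$, which are verbatim the bounds used in the proof of Theorem~\ref{MAINMCG}. Feeding $D_k\leq 2k$ and $\eta_k\leq\max(-1,8k-4\lambda-2)$ into Proposition~\ref{SPECTRAL} and then Proposition~\ref{BOUNDS} produces the four displayed bounds $\delta\leq 2k$, $h^{\mathrm{max}}\leq\max(-1,16k-4\lambda-2)$, $t_0\leq\max(0,18k-4\lambda-1)$, $t_1\leq\max(0,34k-8\lambda-2)$, identical to Theorem~\ref{MAINMCG} with $\lambda=1$, and in particular finite presentation (hence finite generation, by the local Noetherian property). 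For the rational statements, I would instead track the weight and stability degree: over $\mathbb{Q}$, Lemma~\ref{CONF}-type inputs combine with the weight/stability-degree formalism exactly as in~\cite[Theorem 6.1]{JimenezRollandMCGasFI} — the only change from that proof is replacing $\Mod(\Sigma_g)$ by $\Delta_g$ and the closed surface $\Sigma_g^0$ by $\Sigma_g$, neither of which affects the numerology — yielding weight $\leq 2k$ and stability degree $\leq 4k$ for $H^k(\Delta_g^\bullet;\mathbb{Q})$. Representation stability for $n\geq 6k$ then follows from Proposition~\ref{STABLERAN} applied with $2\delta+h^{\mathrm{max}}+1$, or more sharply from the weight/stability-degree bound $n\geq 2(\text{weight})+(\text{stab.\ deg.})$-type estimate giving $n\geq 6k$; alternatively one invokes \cite[Corollary 4.1.8]{CEF} / \cite[Theorem E]{PUTCentral} in the same way as in Theorem~\ref{REPSTABNOR}.

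The only genuinely new point compared with Theorem~\ref{MAINMCG}, and the step I expect to need the most care, is justifying the $\FI^{op}$-functoriality of the restricted Birman sequence~(\ref{BIRHYPER}): one must check that the restriction maps $\Delta_g^n\to\Delta_g^m$ induced by inclusions $[m]\hookrightarrow[n]$ genuinely assemble into a functor from $\FI^{op}$ to the category of extensions with fixed quotient $\Delta_g$, so that the spectral sequences fit together into a spectral sequence of $\FI$-modules in the sense of Section~3. This is inherited from the corresponding statement for $\PMod^\bullet(\Sigma_g)$ proved in~\cite[Section 5]{JimenezRollandRepStability}, since $\Delta_g^n$ is by definition the preimage of $\Delta_g\subseteq\Mod(\Sigma_g)$ under the forgetful map and the $\FI^{op}$-structure maps on $\PMod^\bullet(\Sigma_g)$ are compatible with the forgetful maps to $\Mod(\Sigma_g)$; hence they restrict to $\Delta_g^\bullet$. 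Everything else — the bounds, the finite generation, the rational consequences — is a mechanical replay of the arguments already carried out for Theorem~\ref{MAINMCG} and in~\cite[Theorem 6.1]{JimenezRollandMCGasFI}, so I would state it concisely and refer back to those proofs rather than repeating the computations.
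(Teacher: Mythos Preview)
Your proposal is correct and follows essentially the same route as the paper: the paper's argument (given in the paragraph preceding the theorem rather than as a formal proof) is precisely to observe that $\Delta_g$ is of type $FP_\infty$ via the extension~(\ref{HYPER}), then rerun the spectral sequence machinery of Theorem~\ref{MAINMCG} using the restricted Birman sequence~(\ref{BIRHYPER}) and Lemma~\ref{CONF}, and finally invoke the proof of \cite[Theorem 6.1]{JimenezRollandMCGasFI} for the rational weight and stability degree bounds. One small slip: the stable range $n\geq 6k$ comes from the CEF bound $n\geq \text{weight}+\text{stability degree}=2k+4k$, not from a ``$2(\text{weight})+(\text{stab.\ deg.})$'' formula as you wrote (which would give $8k$); your conclusion is right but the displayed formula should be corrected.
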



\section{Integral cohomology of classifying spaces for groups of  diffeomorphisms}

 Let $M$ be a connected and compact, smooth manifold, orientable or non-orientable, of dimension $d\geq 3$. We consider  $\BPDiff^\bullet(M):\FI^{op}\rightarrow \mathcal{T}op$ to be the $\FI^{op}$-space given by  $[n]\mapsto \BPDiff^n(M)$, where $\BPDiff^n(M)$ is the classifying space of the group $\PDiff^n(M)$. Again, if $M$ is orientable, we can restrict to orientation-preserving diffeomorphisms.  There is a fiber bundle
\begin{equation}\label{FIB}
\BPDiff^n(M)\rightarrow \BDiff(M)
\end{equation}
\noindent where the fiber is  $\Diff(M)/\PDiff^n(M) \approx \PConf_n(\mathring{M})$, the configuration space of $n$ ordered points in the interior of $M$. This gives us a functor from {$\FI^{op}$} to the category {$\Fib\big(\BDiff(M)\big)$.


\begin{theo}\label{DIFFEO} Let $M$ be a connected real manifold (orientable or non-orientable) of dimension $d\geq 3$. 
Let $A$ be any abelian group, then for  $k\geq 0$, the  $\FI$-module $ H^k(B\PDiff^\bullet M;A)$ is presented in finite degrees and
\begin{itemize}
\item[(1)] $\delta\big(H^k\big( B\PDiff^\bullet M;A\big)\big)\leq k$,
\item[(2)]$h^{\text{max}}\big(H^k\big(  B\PDiff^\bullet M;A\big)\big)\leq \max(-1,8k-2\lambda-2)$,
\item[(3)] $t_0\big(H^k\big(  B\PDiff^\bullet M;A\big)\big)\leq\max(0,9k-2\lambda-1)$,
\item[(4)] $t_1\big(H^k\big( B\PDiff^\bullet M;A\big)\big)\leq \max(0,17k-4\lambda-2)$.
\end{itemize}
Furthermore, if $B\Diff(M)$ has the homotopy type of a CW-complex with finitely many cells in each dimension, then $ H^k(B\PDiff^\bullet M;A)$ is finitely generated. 
\end{theo}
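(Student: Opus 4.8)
The plan is to run the same spectral‑sequence argument used for Theorem \ref{MAINMCG}, with the Birman exact sequence replaced by the fiber bundle (\ref{FIB}). As $[n]$ varies, the bundles (\ref{FIB}) assemble into an $\FI^{op}$-fibration over $X=\BDiff(M)$ with fiber the configuration space $\PConf_n(\mathring M)\approx\Diff(M)/\PDiff^n(M)$; by the functoriality of the Leray--Serre spectral sequence (cf.\ \cite[Proposition 3.10]{MILLERKUPERS}) this produces a cohomologically graded first‑quadrant spectral sequence of $\FI$-modules
$$E_2^{p,q}=H^p\bigl(\BDiff(M);H^q(\PConf(\mathring M);A)\bigr)\ \Longrightarrow\ H^{p+q}(\BPDiff^\bullet M;A),$$
where the action of $\Mod(M)=\pi_1(\BDiff(M))$ on the cohomology of the fiber endows $H^q(\PConf(\mathring M);A)$ with the structure of an $\FI[\Mod(M)]$-module.

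Next I would feed in Lemma \ref{CONF}. Since $d=\dim M\ge 3$ we are in the case $\mu=1$, so for every $q\ge 0$ the $\FI[\Mod(M)]$-module $H^q(\PConf(\mathring M);A)$ is presented in finite degrees with $\delta\le q$ and $h^{\text{max}}\le\max(-1,4q-2\lambda-2)$. Proposition \ref{LEMMA} then gives, for all $p,q$, that $E_2^{p,q}$ is presented in finite degrees with $\delta(E_2^{p,q})\le q$ and $h^{\text{max}}(E_2^{p,q})\le\max\bigl(2q-2,\max(-1,4q-2\lambda-2)\bigr)=\max(-1,4q-2\lambda-2)$. Hence, in the notation of Proposition \ref{SPECTRAL} applied on the page $d=2$ (so that $s=\max(k+2,2)=k+2$), we get $D_k\le k$ and $\eta_k\le\max(-1,4k-2\lambda-2)$. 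Part (1) of Proposition \ref{SPECTRAL} gives $\delta\bigl(H^k(\BPDiff^\bullet M;A)\bigr)\le D_k\le k$, which is (1); part (2) gives
$$h^{\text{max}}\bigl(H^k(\BPDiff^\bullet M;A)\bigr)\le\max\Bigl(\max_{\ell\le 2k}\eta_\ell,\ \max_{\ell\le 2k-1}(2D_\ell-2)\Bigr)\le\max\bigl(\max(-1,8k-2\lambda-2),\,4k-4\bigr),$$
and since $8k-2\lambda-2\ge 4k-4$ when $\lambda\le 1$, this equals $\max(-1,8k-2\lambda-2)$, which is (2). Finally Proposition \ref{BOUNDS} converts these into $t_0\le k+\max(-1,8k-2\lambda-2)+1=\max(0,9k-2\lambda-1)$ and $t_1\le k+2\max(-1,8k-2\lambda-2)+2=\max(0,17k-4\lambda-2)$, which are (3) and (4); that $H^k(\BPDiff^\bullet M;A)$ is presented in finite degrees is already part of the output of Proposition \ref{LEMMA}.

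For the finiteness clause I would argue exactly as at the end of the proof of Theorem \ref{MAINMCG}: if $\BDiff(M)$ has the homotopy type of a CW-complex with finitely many cells in each dimension, and using that $M$ is compact so that $H^q(\PConf(\mathring M);A)$ is a finitely generated $\FI[\Mod(M)]$-module, Proposition \ref{LEMMA} makes each $E_2^{p,q}$ a finitely generated $\FI$-module; finite generation is inherited by the subquotients $E_\infty^{p,q}$ (over the Noetherian ring $\mathbb{Z}$), and $H^k(\BPDiff^\bullet M;A)$ admits a finite filtration with these as subquotients, hence is finitely generated. The whole argument is bookkeeping on top of Propositions \ref{LEMMA}, \ref{SPECTRAL} and \ref{BOUNDS}; the only points requiring care are keeping the index ranges in Proposition \ref{SPECTRAL} straight (in particular $s=k+2$, the truncations $\ell\le 2k$ and $\ell\le 2k-1$, and the corner $q=0$, where $\delta=0$ and $h^{\text{max}}=-1$) and, for the last clause, invoking finite generation of the configuration‑space $\FI$-module, which is where compactness of $M$ enters rather than merely finiteness of $\BDiff(M)$.
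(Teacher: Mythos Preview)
Your argument is correct and follows essentially the same route as the paper: the Leray--Serre spectral sequence of the fibration~(\ref{FIB}), the bounds from Lemma~\ref{CONF} with $\mu=1$ fed through Proposition~\ref{LEMMA}, then Proposition~\ref{SPECTRAL} on the $E_2$-page, and finally Proposition~\ref{BOUNDS}. The paper's proof of the finite-generation clause is terser than yours (it simply invokes Proposition~\ref{LEMMA} without spelling out the passage through $E_\infty$), and your remark that compactness of $M$ is what guarantees finite generation of $H^q(\PConf(\mathring M);A)$ is a fair point---the paper assumes $M$ compact at the start of the section rather than in the theorem statement itself.
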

\begin{proof}
 Associated to the fibrations (\ref{FIB}) we have  a cohomologically graded first quadrant Leray-Serre spectral sequence  of $\FI$-modules converging to $H^{p+q}(B\PDiff^\bullet M;A)$ with $E_2-$term:
 $$E_2^{p,q}= H^{p}\big(\BDiff(M);H^q(\PConf(M);A)\big).$$
  From  the linear bounds from Lemma \ref{CONF} and Proposition \ref{LEMMA}, we obtain
 $\delta(E_2^{p,q})\leq q$  and $h^{\text{max}}(E_2^{p,q})\leq \max(-1,4q-2\lambda-2)$, where $\lambda=1$ if $M$ is orientable and $\lambda=0$ if $M$ is non-orientable.
Then $$D_k=\max_{p+q=k} \delta(E_2^{p,q})\leq k\text{;  }\eta_k=\max_{p+q=k} h^{\text{max}}(E_d^{p,q})\leq \max(-1,4k-2\lambda-2).$$
Therefore, from Proposition \ref{SPECTRAL} we obtain
$$\delta\big(H^k(B\PDiff^\bullet M;A)\big)\leq k\text{; }$$
$$h^{\text{max}}\big(H^k(B\PDiff^\bullet M;\mathbb{Z})\big)\leq  \max \big(\max_{\ell\leq 2k} \eta_\ell, \max_{\ell\leq 2k-1} (2\ell-2)\big)$$ $$\leq  \max \big(\max(-1,4(2k)-2\lambda-2), 2(2k-1)-2\big)\leq  \max(-1,8k-2\lambda-2),$$
which are the bounds in (1) and (2). From Proposition \ref{BOUNDS} we obtain 
$$t_0\big(H^k\big( \PMod^\bullet(\Sigma);A\big)\big)\leq k+\max(-1,8k-2\lambda-2)+1=\max(0,9k-2\lambda-1)$$
and
$$t_1\big(H^k\big( \PMod^\bullet(\Sigma);A\big)\big)\leq k+2\big(\max(-1,8k-2\lambda-2)\big)+2=\max(0,17k-4\lambda-2).$$

Finally, if $B\Diff(M)$ has the homotopy type of a CW-complex with finitely many cells in each dimension, then we get finite generation from Proposition \ref{LEMMA} \end{proof}

}

\begin{remark}  Theorem \ref{DIFFEO} is an integral version of \cite[Theorem 6.6]{JimenezRollandMCGasFI} that includes de case of $M$ being non-orientable.    In the statement of \cite[Theorem 6.6]{JimenezRollandMCGasFI} the hypothesis of $M$ being orientable was not explicitly stated, but was necessary at that time since   \cite[Theorem 4]{CEF} was only proven for orientable manifolds.
\end{remark}

From   Theorem \ref{DIFFEO} and Propositions \ref{POLY} and \ref{INDUCT} we obtain  Theorem \ref{DIFFEOCOR}.


\bibliographystyle{amsalpha}
\bibliography{referFI}

\providecommand{\bysame}{\leavevmode\hbox to3em{\hrulefill}\thinspace}
\providecommand{\MR}{\relax\ifhmode\unskip\space\fi MR }
\providecommand{\MRhref}[2]{%
  \href{http://www.ams.org/mathscinet-getitem?mr=#1}{#2}
}
\providecommand{\href}[2]{#2}
\begin{thebibliography}{CMNR18}

\bibitem[BCP01]{BCP}
C.-F. B\"{o}digheimer, F.~R. Cohen, and M.~D. Peim, \emph{Mapping class groups
  and function spaces}, Homotopy methods in algebraic topology ({B}oulder,
  {CO}, 1999), Contemp. Math., vol. 271, Amer. Math. Soc., Providence, RI,
  2001, pp.~17--39.

\bibitem[BH71]{BIRHILDEN}
J.~S. Birman and H.~M. Hilden, \emph{On the mapping class groups of closed
  surfaces as covering spaces}, 81--115. Ann. of Math. Studies, No. 66.

\bibitem[Bir69]{BIRMANSS}
J.~S. Birman, \emph{Mapping class groups and their relationship to braid
  groups}, Comm. Pure Appl. Math. \textbf{22} (1969), 213--238.

\bibitem[Bir74]{BIRMAN}
\bysame, \emph{Braids, links, and mapping class groups}, Annals of Mathematics
  Studies, no.~82, Princeton University Press, Princeton, N.J., 1974.

\bibitem[BT01]{BOTIL}
C.-F. B\"odigheimer and U.~Tillmann, \emph{Stripping and splitting decorated
  mapping class groups}, Cohomological methods in homotopy theory
  ({B}ellaterra, 1998), Progr. Math., vol. 196, Birkh\"auser, Basel, 2001,
  pp.~47--57.

\bibitem[CE17]{CE}
T.~Church and J.~S. Ellenberg, \emph{Homology of {FI}-modules}, Geom. Topol.
  \textbf{21} (2017), no.~4, 2373--2418.

\bibitem[CEF14]{CEFPointCounting}
T.~Church, J.~Ellenberg, and B.~Farb, \emph{Representation stability in
  cohomology and asymptotics for families of varieties over finite fields},
  Algebraic topology: applications and new directions, Contemp. Math., vol.
  620, Amer. Math. Soc., Providence, RI, 2014, pp.~1--54.

\bibitem[CEF15]{CEF}
T.~Church, J.~S. Ellenberg, and B.~Farb, \emph{F{I}-modules and stability for
  representations of symmetric groups}, Duke Math. J. \textbf{164} (2015),
  no.~9, 1833--1910.

\bibitem[CEFN14]{CEFN}
T.~Church, J.~Ellenberg, B.~Farb, and R.~Nagpal, \emph{F{I}-modules over
  {N}oetherian rings}, Geom. Topol. \textbf{18} (2014), no.~5, 2951--2984.

\bibitem[CF13]{CF}
T.~Church and B.~Farb, \emph{Representation theory and homological stability},
  Adv. Math. \textbf{245} (2013), 250--314.

\bibitem[CMNR18]{CMNR}
T.~Church, J.~Miller, R.~Nagpal, and J.~Reinhold, \emph{Linear and quadratic
  ranges in representation stability}, Adv. Math. \textbf{333} (2018), 1--40.

\bibitem[Coh93]{COHEN}
F.~R. Cohen, \emph{On the mapping class groups for punctured spheres, the
  hyperelliptic mapping class groups, {${\rm SO}(3)$}, and {${\rm
  Spin}^c(3)$}}, Amer. J. Math. \textbf{115} (1993), no.~2, 389--434.

\bibitem[Coh01]{TORUS}
\bysame, \emph{On genus one mapping class groups, function spaces, and modular
  forms}, Topology, geometry, and algebra: interactions and new directions
  ({S}tanford, {CA}, 1999), Contemp. Math., vol. 279, Amer. Math. Soc.,
  Providence, RI, 2001, pp.~103--128.

\bibitem[EE69]{EE}
C.~J. Earle and J.~Eells, \emph{A fibre bundle description of {T}eichm\"{u}ller
  theory}, J. Differential Geometry \textbf{3} (1969), 19--43.

\bibitem[GL17]{GL}
W.~L. Gan and L.~Li, \emph{On central stability}, Bull. Lond. Math. Soc.
  \textbf{49} (2017), no.~3, 449--462.

\bibitem[Ham66]{HamNONORIENT}
M.-E. Hamstrom, \emph{Homotopy groups of the space of homeomorphisms on a
  {$2$}-manifold}, Illinois J. Math. \textbf{10} (1966), 563--573.

\bibitem[Han09]{HANDBURY}
E.~Handbury, \emph{Homological stability of non-orientables mapping class
  groups wih marked points}, Proceedings of the American Mathematical Society
  \textbf{137} (2009), no.~1, 385--392.

\bibitem[Har85]{HarerStability}
J.~L. Harer, \emph{Stability of the homology of the mapping class groups of
  orientable surfaces}, The Annals of Mathematics \textbf{121} (1985), no.~2,
  215--249.

\bibitem[HW10]{HatcherWahl}
A.~Hatcher and N.~Wahl, \emph{Stabilization for mapping class groups of
  {$3-$}manifolds}, Duke Mathematical Journal \textbf{155} (2010), no.~2,
  205--269.

\bibitem[HX17]{KLEINBUNDLES}
C.~E. Hidber and M.~A. Xicot\'{e}ncatl, \emph{Characteristic classes of {K}lein
  bottle bundles}, Topology Appl. \textbf{220} (2017), 1--13.

\bibitem[HX18]{KLEIN}
\bysame, \emph{The mod 2 cohomology of the mapping class group of the {K}lein
  bottle with marked points}, J. Pure Appl. Algebra \textbf{222} (2018), no.~6,
  1478--1488.

\bibitem[Iva87]{IVANOV}
N.~V. Ivanov, \emph{Complexes of curves and {T}eichm\"{u}ller modular groups},
  Uspekhi Mat. Nauk \textbf{42} (1987), no.~3(255), 49--91, 255.

\bibitem[JR11]{JimenezRollandRepStability}
R.~Jim\'enez~Rolland, \emph{Representation stability for the cohomology of the
  moduli space $\mathcal{M}_g^{n}$}, Algebraic \& Geometric Topology 11
  \textbf{11} (2011), no.~5.

\bibitem[JR15]{JimenezRollandMCGasFI}
\bysame, \emph{On the cohomology of pure mapping class groups as {FI}-modules},
  J. Homotopy Relat. Struct. \textbf{10} (2015), no.~3, 401--424.

\bibitem[KM18]{MILLERKUPERS}
A.~Kupers and J.~Miller, \emph{Representation stability for homotopy groups of
  configuration spaces}, J. Reine Angew. Math. \textbf{737} (2018), 217--253.

\bibitem[Kor02]{KOR}
M.~Korkmaz, \emph{Mapping class groups of nonorientable surfaces}, Geom.
  Dedicata \textbf{89} (2002), 109--133.

\bibitem[Lam99]{LAM}
T.~Y. Lam, \emph{Lectures on modules and rings}, Graduate Texts in Mathematics,
  vol. 189, Springer-Verlag, New York, 1999.

\bibitem[MW16]{MIWILSON}
J.~Miller and J.~C.~H. Wilson, \emph{Higher order representation stability and
  ordered configuration spaces of manifolds}, arxiv Preprint: 1611.01920
  (2016).

\bibitem[MX17]{PROJECTIVE}
M.~Maldonado and M.~A. Xicot\'{e}ncatl, \emph{On the cohomology of the mapping
  class group of the punctured projective plane}, arXiv Preprint:1705.07937
  (2017).

\bibitem[Pat17]{PATZT}
P.~Patzt, \emph{Central stability homology}, arXiv Preprint 1704.04128 (2017).

\bibitem[Put12]{PUTStab}
A.~Putman, \emph{Representation stability, congruence subgroups, and mapping
  class groups}, arXiv preprint: 1201.4876v2 (2012).

\bibitem[Put15]{PUTCentral}
\bysame, \emph{Stability in the homology of congruence subgroups}, Invent.
  Math. \textbf{202} (2015), no.~3, 987--1027.

\bibitem[Sma59]{SMALE}
S.~Smale, \emph{Diffeomorphisms of the {$2$}-sphere}, Proc. Amer. Math. Soc.
  \textbf{10} (1959), 621--626.

\bibitem[Stu10]{STUKOW}
M.~Stukow, \emph{Generating mapping class groups of nonorientable surfaces with
  boundary}, Adv. Geom. \textbf{10} (2010), no.~2, 249--273.

\bibitem[Wah08]{WAHLNONORIENT}
N.~Wahl, \emph{Homological stability for the mapping class groups of
  non-orientable surfaces}, Invent. Math. \textbf{171} (2008), no.~2, 389--424.

\bibitem[Wan02]{WANG}
J.~H. Wang, \emph{On the braid groups for {${\bf R}{\rm P}^2$}}, J. Pure Appl.
  Algebra \textbf{166} (2002), no.~1-2, 203--227.

\end{thebibliography}

\begin{small}
\noindent Instituto de Matem\'aticas, Universidad Nacional Aut\'onoma de M\'exico \\
Oaxaca de Ju\'arez, Oaxaca, M\'exico 68000 \\ 
E-mail: \texttt{\href{mailto:rita@im.unam.mx}{rita@im.unam.mx}}
\\
\end{small}
 
 \end{document}